\definecolor{LightCyan}{rgb}{0.88,1,1}
\definecolor{Gray}{gray}{0.85}
\newtheorem{lemma}{Lemma}[section]
\newtheorem{remark}{Remark}[section]
\newtheorem{proposition}{Proposition}[section]
\newtheorem{theorem}{Theorem}[section]
\newtheorem{problem}{Problem}
\def\CT{\mathcal{T}}
\def\E{K}
\def\HuE{H^1(\E)}
\def\HutO{H^{1+t}(\O)}
\def\Hcd{H_0^2(\O)}
\def\Hcu{H_0^1(\O)}
\def\LO{L^2(\O)}
\def\N{\mathbb{N}}
\def\O{\Omega}
\def\P{\mathbb{P}}
\def\R{\mathbb{R}}
\def\hdel{\widehat{\delta}}
\def\l{\lambda}
\def\sp{\mathop{\mathrm{sp}}\nolimits}
\def\Vh{V_h}
\def\VK{V^{\E}_h}
\def\HdoO{{H_0^2(\O)}}
\def\HdsO{{H^{2+s}(\O)}}
\def\HdoK{{H^{2}(\E)}}
\def\WK{W^{\E}_h}
\def\Wh{W_h}
\def\sumkth{\sum_{K\in \mathcal{T}_h}}
\journal{}
\date{\today}
\begin{document}
\begin{frontmatter}

\title{A Virtual Element Method for the Transmission Eigenvalue Problem}

\author[1,2]{David Mora}
\ead{dmora@ubiobio.cl}
\address[1]{GIMNAP, Departamento de Matem\'atica,
Universidad del B\'io-B\'io, Casilla 5-C, Concepci\'on, Chile.}
\address[2]{Centro de Investigaci\'on en Ingenier\'ia Matem\'atica
(CI$^2$MA), Universidad de Concepci\'on, Concepci\'on, Chile.}
\author[2,3]{Iv\'an Vel\'asquez}
\ead{ivelasquez@ing-mat.udec.cl}
\address[3]{Departamento de Ingenier\'ia Matem\'atica,
Universidad de Concepci\'on, Concepci\'on, Chile.}

\begin{abstract} 
In this paper, we analyze a virtual element method (VEM)
for solving a non-selfadjoint fourth-order eigenvalue
problem derived from the transmission eigenvalue problem.
We write a variational formulation and propose
a $C^1$-conforming discretization by means of the VEM.
We use the classical approximation
theory for compact non-selfadjoint operators to obtain
optimal order error estimates for the eigenfunctions
and a double order for the eigenvalues.
Finally, we present some numerical experiments
illustrating the behavior of the virtual scheme
on different families of meshes. 
\end{abstract}

\begin{keyword} 
Virtual element method 
\sep transmission eigenvalue
\sep spectral problem 
\sep error estimates.
\MSC 65N25 \sep 65N30 \sep 65N21 \sep 78A46
\end{keyword}

\end{frontmatter}


\setcounter{equation}{0}
\section{Introduction}
\label{SEC:INTR}

In this work, we study a Virtual Element Method for an
eigenvalue problem arising in scattering theory.
The {\it Virtual Element Method} (VEM), introduced in
\cite{BBCMMR2013,BBMR2014}, is a generalization
of the Finite Element Method which is characterized
by the capability of dealing with very general
polygonal/polyhedral meshes, and it also permits
to easily implement highly regular discrete spaces.
Indeed, by avoiding the explicit construction of
the local basis functions, the VEM can easily handle
general polygons/polyhedrons without complex
integrations on the element
(see \cite{BBMR2014} for details on the coding aspects of the method).
The VEM has been developed and analyzed for many problems, see for instance
\cite{ABMV2014,ABSV2016,BBM,BLM2015,BLV-M2AN,BMR2016,BBBPS2016-0,BGS17,BM12,CG2017,CGS17,CMS2016,ChM-camwa,FS-M2AN18,PPR15,V-m3as18}.
Regarding  VEM for spectral problems, we mention \cite{BMRR,GMV-Arxiv2018,GV-IMA2017,MR2017,MRR2015,MRV}.
We note that there are other methods that can
make use of arbitrarily shaped polygonal/polyhedral meshes,
we cite as a minimal sample of them \cite{BLMbook2014,CGH14,DPECMAME2015,TPPM10}.

Due to their important role in many application areas, there has been a
growing interest in recent years towards
developing numerical schemes for spectral problems (see \cite{Boffi}).
In particular, we are going to analyze a virtual element
approximation of the transmission eigenvalue problem.
The motivation for considering this problem is that it 
plays an important role in inverse scattering theory
\cite{CakColMonkSun2010,ColtonKress2013}.
This is due to the fact that transmission eigenvalues
can be determined from the far-field data of the scattered
wave and used to obtain estimates for the material properties
of the scattering object \cite{CakCayCol2008,CakGinHad2010}.

In recent years various numerical methods
have been proposed to solve this eigenvalue problem;
see for example the following
references~\cite{CakMonkSun2014,CRV2018,ChenGZZ2016,ColtonMonkSun2010,GengJiSunXu2016,HYBi2017NonConf,HYBi2017,Sun2011}.
In particular, the transmission eigenvalue problem
is often solved by reformulating it as a fourth-order eigenvalue
problem. In \cite{CakMonkSun2014}, a $C^1$ finite element method using Argyris
elements has been proposed, a complete analysis of the method
including error estimates are proved
using the theory for compact non-self-adjoint operators.
However, the construction of conforming finite elements for $H^2(\O)$
is difficult in general, since they usually involve a large number of degrees
of freedom (see \cite{ciarlet}).
More recently, in \cite{GengJiSunXu2016} a discontinuous
Galerkin method has been proposed and analyzed to solve the
fourth-order transmission eigenvalue problem; moreover,
in \cite{ChenGZZ2016} a $C^0$ linear finite element method
has been introduced to solve the spectral problem.

The purpose of the present paper is to introduce and analyze a $C^1$-VEM
for solving a fourth-order spectral problem derived from the
transmission eigenvalue problem. We consider a variational
formulation of the problem written in $H^{2}(\O)\times H^1(\O)$
as in \cite{CakMonkSun2014,GengJiSunXu2016}, where an auxiliary
variable is introduced to transform the problem into a linear
eigenvalue problem. Here, we exploit the capability of VEM to build highly regular
discrete spaces (see \cite{BM13,BM12}) and propose a conforming $H^{2}(\O)\times H^1(\O)$
discrete formulation, which makes use of a very simple set of degrees of freedom,
namely 4 degrees of freedom per vertex of the mesh.
Then, we use the classical spectral theory for non-selfadjoint compact operators
(see \cite{BO,Osborn1975}) to deal with the continuous and
discrete solution operators, which appear as the solution of the
continuous and discrete source problems, and whose spectra are
related with the solutions of the transmission eigenvalue problem.
Under rather mild assumptions on the polygonal meshes (made by possibly non-convex elements),
we establish that the resulting VEM scheme provides a correct approximation of the
spectrum and prove optimal-order error estimates for the eigenfunctions
and a double order for the eigenvalues.
Finally, we note that, differently from the FEM
where building globally conforming $H^2(\O)$
approximation is complicated, here the virtual space
can be built with a rather simple construction
due to the flexibility of the VEM.
In a summary, the advantages of the present virtual element discretization
are the possibility to use general polygonal meshes
and to build conforming $H^2(\O)$ approximations.

The remainder of this paper is structured as follows: In 
Section~\ref{SEC:SpProCont}, we introduce the variational formulation of the
transmission eigenvalue problem, define a solution operator and establish
its spectral characterization. In Section~\ref{SEC:DISCR_PROB},
we introduce the virtual element discrete formulation, describe
the spectrum of a discrete solution operator and establish some
auxiliary results. In Section~\ref{SEC:approximation}, we prove
that the numerical scheme provides a correct spectral approximation
and establish optimal order error estimates for the eigenvalues
and eigenfunctions using the standard theory for compact
and non-selfadjoint operators. Finally, we report some numerical
tests that confirm the theoretical analysis
developed in Section~\ref{SEC:NumRes}.

In this article, we will employ standard notations for Sobolev
spaces, norms and seminorms. In addition, we will denote by $C$ a
generic constant independent of the mesh parameter $h$, which may
take different values in different occurrences.

\setcounter{equation}{0}
\section{The transmission eigenvalue problem}\label{SEC:SpProCont}

Let $\O\subset\R^2$ be a bounded domain with polygonal boundary
$\partial \O$. We denote by $\nu$ the outward unit
normal vector to $\partial \O$ and by $\partial_\nu$ the normal derivative.
Let $n$ be a real value function in $L^{\infty}(\O)$
such that $n-1$ is strictly positive (or strictly negative)
almost everywhere in $\O$. The transmission eigenvalue
problem reads as follows:

Find the so-called transmission eigenvalue $k\in \mathbb{C}$
and a non-trivial pair of functions $(w_1,w_2)\in L^2(\O)\times L^2(\O)$,
such that $(w_1-w_2)\in H^2(\O)$ satisfying
\begin{align}
\Delta w_1+k^2n(x)w_1=0&\quad \mbox{in }\O,\label{eq1}\\
\Delta w_2+k^2w_2=0&\quad \mbox{in }\O,\label{eq2}\\
w_1=w_2&\quad \mbox{on }\partial \O,\label{eq3}\\
\partial_\nu w_1=\partial_\nu w_2&\quad \mbox{on }\partial \O.\label{eq4}
\end{align}
Now, we rewrite problem above in the following
equivalent form for $u:=(w_1-w_2)\in H_0^2(\O)$ (see \cite{CakMonkSun2014}):

Find $(k,u)\in\mathbb{C}\times H_0^2(\O)$ such that
\begin{align}\label{eqfourt-order}
(\Delta +k^2n)\frac{1}{n-1}(\Delta+k^2)u=0 &\quad \mbox{in }\O.
\end{align}

The variational formulation of problem \eqref{eqfourt-order}
can be stated as: Find $(k,u)\in\mathbb{C}\times H_0^2(\O)$, $u\ne0$
such that
\begin{align}\label{eqfourt-order-weak}
\int_{\O} \frac{1}{n-1}(\Delta u +k^2 u)(\Delta \overline{v}+k^2n\overline{v})=0& \quad \forall v\in H_0^2(\O),
\end{align}
where $\overline{v}$ denotes the complex conjugate of $v$.
Now, expanding the previous expression we obtain
the following quadratic eigenvalue problem:
\begin{align}\label{eqfourt-ord-expand}
\int_\O \frac{1}{n-1}\Delta u\Delta \overline{v}+\tau\int_\O \frac{1}{n-1} u\Delta\overline{v}
+\tau\int_\O \frac{1}{n-1} \Delta u \overline{nv}+\tau^2 \int_\O \frac{1}{n-1} u\overline{nv}=0
\quad \forall v\in H_0^2(\O),
\end{align}
where $\tau:=k^2$.
It is easy to show that $k = 0$ is not an eigenvalue of the
problem (see \cite{CakMonkSun2014}).
Moreover, for the sake of simplicity, we will assume that
the index of refraction function $n(x)$
as a real constant. Nevertheless, this assumption
do not affect the generality of the forthcoming analysis.

For the theoretical analysis it is convenient
to transform problem \eqref{eqfourt-ord-expand}
into a linear eigenvalue problem. With this aim,
let $\phi$ be the solution of the problem:
Find $\phi\in \Hcu$ such that
\begin{align}
&\Delta \phi=\tau \frac{n}{n-1}u \quad &\mbox{in }&\O,\label{prob-aux1}\\
&\phi=0 \quad &\mbox{on }&\partial\Omega.\label{prob-aux2}
\end{align}

Therefore, by testing problem \eqref{prob-aux1}-\eqref{prob-aux2} with functions
in $H_0^1(\O)$, we arrive at the following weak formulation of the problem:
\begin{problem}\label{P1}
Find $(\lambda,u,\phi)\in \mathbb{C}\times H_0^2(\O)\times H_0^1(\O)$ with $(u,\phi)\neq 0$ such that 
\begin{align*}
a((u,\phi),(v,\psi))=\lambda b((u,\phi),(v,\psi))\quad \forall (v,\psi)\in  \HdoO\times\Hcu,
\end{align*}
\end{problem}
where $\lambda=-\tau$ and the sesquilinear forms $a(\cdot,\cdot)$
and $b(\cdot,\cdot)$ are defined by
\begin{align}
a((u,\phi),(v,\psi)):=&\frac{1}{n-1}\int_\O D^2 u:D^2 \overline{v}
+\int_\O \nabla \phi \cdot \nabla \overline{\psi},\nonumber\\[2ex] 
b((u,\phi),(v,\psi)):=&\frac{n}{n-1} \int_\O \Delta u \overline{v}+\frac{1}{n-1}\int_\O u\Delta\overline{v}
-\int_\O \nabla \phi \cdot\nabla \overline{v}+ \frac{n}{n-1}\int_\O  u\overline{\psi},\nonumber
\end{align}
for all $(u,\phi),(v,\psi)\in\HdoO\times\Hcu$. Moreover,
$":"$ denotes the usual scalar product of $2\times2$-matrices,
$D^2 u:=(\partial_{ij}u)_{1\le i,j\le2}$ denotes the Hessian matrix of $u$.

We endow $\HdoO\times\Hcu$ with the corresponding product norm,
which we will simply denote $\Vert(\cdot,\cdot)\Vert$.

Now, we note that the sesquilinear forms $a(\cdot,\cdot)$
and $b(\cdot,\cdot)$ are bounded forms.
Moreover, we have that $a(\cdot,\cdot)$ is elliptic.

\begin{lemma}
\label{ha-elipt}
There exists a constant $\alpha_{0}>0$, depending on $\O$, such that
$$
a((v,\psi),(v,\psi))
\ge\alpha_{0}\left\|(v,\psi)\right\|^2
\qquad\forall (v,\psi)\in \HdoO\times\Hcu.
$$
\end{lemma}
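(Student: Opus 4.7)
The plan is to prove the coercivity by evaluating the form on the diagonal and then invoking Poincaré-type inequalities on each of the two homogeneous spaces.

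First, I would compute
\begin{equation*}
a((v,\psi),(v,\psi)) = \frac{1}{n-1}\int_\O |D^2 v|^2 + \int_\O |\nabla \psi|^2,
\end{equation*}
observing that the right-hand side is real. Using the standing hypothesis that $n\in L^\infty(\O)$ with $n-1$ strictly positive almost everywhere (the strictly negative case being handled symmetrically), there exists a constant $c_0>0$ such that $\tfrac{1}{n-1}\ge c_0$ a.e.\ in $\O$. Hence the first term is bounded below by $c_0\,|v|^2_{H^2(\O)}$ and the second by $|\psi|^2_{H^1(\O)}$.

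Next, I would pass from seminorms to full norms by Poincaré-type estimates. For $\psi\in \Hcu$ the standard Poincaré inequality gives $\Vert\psi\Vert_{\LO}\le C_\O\,|\psi|_{H^1(\O)}$, so $|\psi|^2_{H^1(\O)}\ge c_1\,\Vert\psi\Vert^2_{H^1(\O)}$. For $v\in\Hcd$ the analogous iterated Poincaré inequality (applied to $v$ and to $\partial_i v\in \Hcu$) yields $\Vert v\Vert_{H^1(\O)}\le C_\O\,|v|_{H^2(\O)}$, hence $|v|^2_{H^2(\O)}\ge c_2\,\Vert v\Vert^2_{H^2(\O)}$. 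Combining these with the previous step,
\begin{equation*}
a((v,\psi),(v,\psi))\ge c_0 c_2\,\Vert v\Vert^2_{H^2(\O)} + c_1\,\Vert \psi\Vert^2_{H^1(\O)}\ge \alpha_0\bigl(\Vert v\Vert^2_{H^2(\O)}+\Vert \psi\Vert^2_{H^1(\O)}\bigr),
\end{equation*}
with $\alpha_0:=\min\{c_0c_2,c_1\}>0$, which is the desired bound in the product norm.

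There is no real obstacle here: the proof is a direct combination of the $L^\infty$ lower bound on $\tfrac{1}{n-1}$ with the two Poincaré inequalities. The only subtlety worth flagging is the sign convention on $n-1$: if $n-1$ were strictly negative one must either work with $-a(\cdot,\cdot)$ or change the sign of $\lambda$ accordingly, but in either case the strict boundedness of $|n-1|$ away from zero is what makes the first contribution coercive. The constant $\alpha_0$ depends only on $\O$ (through the Poincaré constants) and on $\mathop{\mathrm{ess\,inf}}(n-1)$, as announced.
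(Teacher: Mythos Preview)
Your argument is correct and is essentially the same as the paper's: the paper simply invokes the fact that $\{\Vert D^2 v\Vert_{0,\O}^2 + \Vert \nabla\psi\Vert_{0,\O}^2\}^{1/2}$ is a norm on $\HdoO\times\Hcu$ equivalent to the usual one, and you have spelled out this equivalence explicitly via the Poincar\'e inequalities on $\Hcu$ and $\HdoO$. Note that the paper also assumes for simplicity that $n$ is a positive constant, which makes the factor $\tfrac{1}{n-1}$ a fixed positive scalar and removes the need for your $L^\infty$ lower-bound step, though your more general treatment does no harm.
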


\begin{proof}
The result follows immediately from the fact that 
$\{\Vert D^2 v\Vert_{0,\O}^2 + \Vert \nabla\psi\Vert_{0,\O}^2\}^{1/2}$ is a norm on $\HdoO\times \Hcu$,
equivalent with the usual norm. 
\end{proof}

We define the solution operator associated with Problem~\ref{P1}:
\begin{equation*}
\begin{array}{clll}
T: \HdoO\times \Hcu & \longrightarrow& \HdoO\times\Hcu\\
 (f,g)&\longmapsto &T(f,g)=(\tilde{u},\tilde{\phi})
\end{array}
\end{equation*}
as the unique solution (as a consequence of Lemma~\ref{ha-elipt})
of the corresponding source problem:
\begin{equation}\label{source}
a((\tilde{u},\tilde{\phi}),(v,\psi))=b((f,g),(v,\psi))\quad \forall (v,\psi)\in \HdoO\times\Hcu. 
\end{equation}

The linear operator $T$ is then well defined and bounded.
Notice that $(\lambda,u,\phi)\in\mathbb{C}\times\HdoO\times\Hcu$ solves 
Problem~\ref{P1} if and only if $(\mu,u,\phi)$, with $\mu:=\frac1{\l}$,
is an eigenpair of $T$, i.e., $T(u,\phi)=\mu (u,\phi)$.

We observe that no spurious eigenvalues are introduced
into the problem since if $\mu\ne0$, $(0,\phi)$ is not an
eigenfunction of the problem.

The following is an additional regularity result for the solution of
the source problem~\eqref{source} and consequently, for the generalized eigenfunctions of $T$.

\begin{lemma}\label{lem_regul}
There exist $s,t\in(1/2,1]$ and $C>0$ such that,
for all $(f,g)\in \HdoO\times\Hcu$, the solution
$(\tilde{u},\tilde{\phi})$ of problem~\eqref{source}
satisfies $\tilde{u}\in H^{2+s}(\Omega)$, $\tilde{\phi}\in H^{1+t}(\Omega)$, and
$$\Vert\tilde{u}\Vert_{2+s,\Omega}+\Vert \tilde{\phi}\Vert_{1+t,\Omega}\le C\Vert(f,g)\Vert.$$
\end{lemma}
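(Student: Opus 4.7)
The plan is to exploit the fact that the source problem~\eqref{source} decouples into two independent equations for $\tilde u$ and $\tilde\phi$, obtained by selecting $(v,0)$ and $(0,\psi)$ as test functions, and then to apply the standard regularity theory for the Poisson and biharmonic operators on polygonal domains.

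First I would set $v=0$ in~\eqref{source}. The only surviving terms give
$$
\int_\O \nabla\tilde\phi\cdot\nabla\bar\psi \,=\, \frac{n}{n-1}\int_\O f\,\bar\psi\qquad\forall\psi\in H_0^1(\O),
$$
so $\tilde\phi$ is the weak solution of a Dirichlet Poisson problem with right-hand side $\tfrac{n}{n-1}f\in L^2(\O)$ (since $f\in H_0^2(\O)\hookrightarrow L^2(\O)$). The usual elliptic regularity on polygonal domains (Grisvard) then provides some $t\in(1/2,1]$, dictated by the maximal interior angle of $\O$, such that $\tilde\phi\in H^{1+t}(\O)$ with $\|\tilde\phi\|_{1+t,\O}\le C\|f\|_{0,\O}\le C\|(f,g)\|$.

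Next I would set $\psi=0$. Recalling that $\int_\O D^2 u\!:\!D^2 v = \int_\O \Delta u\,\Delta v$ for $u,v\in H_0^2(\O)$, the equation for $\tilde u$ reads
$$
\int_\O \Delta\tilde u\,\Delta\bar v \,=\, n\int_\O \Delta f\,\bar v \,+\, \int_\O f\,\Delta\bar v \,-\, (n-1)\int_\O \nabla g\cdot\nabla\bar v\qquad\forall v\in H_0^2(\O).
$$
Because $f\in H_0^2(\O)$ and $g\in H_0^1(\O)$, the integrations by parts needed to identify the strong form produce no boundary contributions, and I can recast the right-hand side as $\langle F,v\rangle$ for a distribution $F\in H^{-1}(\O)$ whose $H^{-1}$-norm is controlled by $\|f\|_{2,\O}+\|g\|_{1,\O}$. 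Thus $\tilde u$ is the weak solution of a clamped biharmonic problem with $H^{-1}$ data, and the standard polygonal regularity for this operator (in the spirit of Blum--Rannacher) delivers some $s\in(1/2,1]$, dictated by the reentrant angles of $\O$, for which $\tilde u\in H^{2+s}(\O)$ and $\|\tilde u\|_{2+s,\O}\le C\|(f,g)\|$.

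The main obstacle I anticipate is the biharmonic step: one has to verify that after the integrations by parts the right-hand side really does have a controlled $H^{-1}$-norm, and one must also rely on the implicit geometric assumption that the polygonal domain $\O$ admits a biharmonic regularity exponent strictly larger than $1/2$ (this is a condition on the largest reentrant angle of $\O$, and is the same kind of hypothesis used throughout the FEM literature on fourth-order problems). Summing the two bounds then yields the announced estimate.
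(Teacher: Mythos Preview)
Your proposal is correct and follows precisely the paper's own strategy: decouple~\eqref{source} by testing with $(0,\psi)$ and $(v,0)$, then invoke classical regularity for the Laplace problem with $L^2$ data to handle $\tilde\phi$ and for the clamped biharmonic problem with $H^{-1}$ data (cf.~Grisvard) to handle $\tilde u$. The paper's proof is stated in two sentences with exactly these two ingredients; you have simply supplied the details that the paper leaves to the reader.
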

\begin{proof}
The estimate for $\tilde{\phi}$ follows
from the classical regularity result for the Laplace
problem with its right-hand side in $\LO$.
The estimate for $\tilde{u}$ follows from 
the classical regularity result for the biharmonic
problem with its right-hand side
in $H^{-1}(\Omega)$ (cf. \cite{G}).
\end{proof}

\begin{remark}\label{rem_regul}
The constant $s$ in the lemma above is the Sobolev regularity for the
biharmonic equation with the right-hand side in $H^{-1}(\Omega)$
and homogeneous Dirichlet boundary conditions.
The constant $t$ is the Sobolev exponent for the Laplace problem with
homogeneous Dirichlet boundary conditions.
These constants only depend on the domain $\Omega$.
If $\Omega$ is convex, then $s=t=1$. Otherwise, the lemma holds
for all $s < s_0$ and $t < t_0$, where $s_0,t_0\in(1/2,1]$
depend on the largest reentrant angle of $\Omega$.
\end{remark}

Hence, because of the compact inclusions $ H^{2+s}(\O) \hookrightarrow \HdoO$
and $H^{1+t}(\O)\hookrightarrow \Hcu$, we can conclude that $T$ is a compact operator.
So, we obtain the following spectral characterization result.

\begin{lemma} The spectrum of $T$ satisfies $\sp(T)=\{0\}\cup\{\mu_k\}_{k\in \N}$,
where  $\{\mu_k\}_{k\in \N}$ is a sequence of complex eigenvalues which converges to 0
and their corresponding eigenspaces lie in $H^{2+s}(\O)\times H^{1+t}(\O)$.
In addition $\mu=0$ is an infinite multiplicity eigenvalue of $T$.
\end{lemma}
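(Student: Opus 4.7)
The plan is to deduce the statement from the classical Riesz--Schauder spectral theorem for compact operators on a Banach space. The steps are: (i) verify compactness of $T$; (ii) read off the structure of $\sp(T)\setminus\{0\}$ from Riesz--Schauder; (iii) lift the regularity of the eigenspaces via Lemma~\ref{lem_regul}; and (iv) exhibit an infinite-dimensional subspace of $\ker T$ to justify the infinite multiplicity at $\mu=0$.

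For (i), Lemma~\ref{lem_regul} already supplies the key a priori estimate: for every $(f,g)\in\HdoO\times\Hcu$, the solution $T(f,g)=(\tilde u,\tilde\phi)$ lives in $\HdsO\times H^{1+t}(\O)$ with $\Vert\tilde u\Vert_{2+s,\O}+\Vert\tilde\phi\Vert_{1+t,\O}\le C\Vert(f,g)\Vert$. Since $s,t>1/2$ and $\O$ is bounded, Rellich--Kondrachov makes the inclusions $\HdsO\hookrightarrow\HdoO$ and $H^{1+t}(\O)\hookrightarrow\Hcu$ compact, and composing the bounded lift with these compact injections yields compactness of $T$ on $\HdoO\times\Hcu$.

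For (ii)--(iii), Riesz--Schauder says that the nonzero spectrum of a compact operator on an infinite-dimensional Banach space is an at most countable family of eigenvalues of finite algebraic multiplicity whose only possible accumulation point is $0$, while $0$ itself always lies in the spectrum. This yields $\sp(T)=\{0\}\cup\{\mu_k\}_{k\in\N}$ with $\mu_k\to 0$. For each nonzero eigenpair $(\mu,(u,\phi))$ one has $(u,\phi)=\mu^{-1}T(u,\phi)$; applying Lemma~\ref{lem_regul} with $(f,g):=(u,\phi)$ then shows $(u,\phi)\in\HdsO\times H^{1+t}(\O)$, which is the required regularity of the eigenspaces.

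The main obstacle is (iv), the infinite-multiplicity claim at $\mu=0$. My plan here is to inspect when the antilinear functional $(v,\psi)\mapsto b((f,g),(v,\psi))$ vanishes identically on $\HdoO\times\Hcu$, and to extract from this an explicit infinite-dimensional subspace of pairs $(f,g)$. Heuristically, the extra freedom gained by doubling the unknowns through the auxiliary variable $\phi$ cannot enlarge the transmission spectrum (as already noted after Problem~\ref{P1}); this redundancy must therefore surface as an infinite-dimensional null space of $T$. Once such a subspace is exhibited the proof is complete, the other three items being direct consequences of Lemma~\ref{lem_regul} and Riesz--Schauder.
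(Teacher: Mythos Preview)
Your steps (i)--(iii) are correct and amount to exactly what the paper's one-line proof invokes: compactness of $T$ (from Lemma~\ref{lem_regul} together with Rellich--Kondrachov) yields the Riesz--Schauder description of $\sp(T)\setminus\{0\}$, and applying Lemma~\ref{lem_regul} once more to an eigenpair gives the stated regularity of the eigenspaces.

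Step (iv), however, will not go through as planned. If you actually carry out the computation of $\ker T=\{(f,g):\ b((f,g),(v,\psi))=0\ \text{for all }(v,\psi)\in\HdoO\times\Hcu\}$, you find it is trivial. Testing with $v=0$ and arbitrary $\psi\in\Hcu$ gives $\tfrac{n}{n-1}\int_\O f\,\overline\psi=0$, hence $f=0$; with $f=0$ the remaining condition reads $\int_\O\nabla g\cdot\nabla\overline v=0$ for all $v\in\HdoO$, and since $C_c^\infty(\O)\subset\HdoO$ is $H^1$-dense in $\Hcu$, this forces $\nabla g=0$ and thus $g=0$. So there is no infinite-dimensional subspace to exhibit, and your heuristic---that the auxiliary variable $\phi$ produces redundancy visible in $\ker T$---is not borne out by the structure of $b$. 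The clause ``$\mu=0$ is an infinite multiplicity eigenvalue'' should be read as the automatic fact that $0\in\sp(T)$ for a compact operator on an infinite-dimensional space; the paper's own proof does not substantiate the eigenvalue wording either, and in the strict sense $0$ is not an eigenvalue of $T$ here.
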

\begin{proof}
The proof is obtained from the compactness of $T$ and Lemma~\ref{lem_regul}.
\end{proof}

\setcounter{equation}{0}
\section{The virtual element discretization}\label{SEC:DISCR_PROB}

In this section, we will write the $C^1$-VEM discretization of Problem~\ref{P1}.
With this aim, we start with the mesh construction
and the assumptions considered to introduce the discrete virtual element spaces.

Let $\left\{\CT_h\right\}_h$ be a sequence of decompositions of $\O$
into polygons $\E$ we will denote by $h_\E$ the diameter of the element $\E$
and $h$ the maximum of the diameters of all the elements of the mesh,
i.e., $h:=\max_{\E\in\CT_h}h_\E$.
In what follows, we denote by $N_\E$ the number of vertices of $\E$,
by $e$ a generic edge of $\left\{\CT_h\right\}_h$ and for all $e\in\partial\E$,
we define a unit normal vector $\nu_\E^e$ that points outside of $\E$.

In addition, we will make the following
assumptions as in \cite{BBCMMR2013,BMRR}:
there exists a positive real number $C_{\CT}$ such that,
for every $h$ and every $\E\in \CT_h$,
\begin{itemize}
\item[{\bf A1}:] the ratio between the shortest edge
and the diameter $h_\E$ of $\E$ is larger than $C_{\CT}$;
\item[{\bf A2}:] $\E\in\CT_h$ is star-shaped with
respect to every point of a  ball
of radius $C_{\CT}h_\E$.
\end{itemize}

In order to introduce the method,
we first define two preliminary discrete spaces
as follows: For each polygon $\E\in \mathcal{T}_h$
(meaning open simply connected set whose boundary is a
non-intersecting line made of a finite number of straight line segments)
we define the following finite dimensional spaces, 
\begin{align*}
\widetilde{W}_h^K
:=\left\{v_h\in \HdoK : \Delta^2v_h\in\P_{2}(\E), v_h|_{\partial\E}\in C^0(\partial\E),
v_h|_e\in\P_3(e)\,\,\forall e\in\partial\E,\right.\\
\left.\nabla v_h|_{\partial\E}\in C^0(\partial\E)^2,
\partial_\nu v_h|_e\in\P_1(e)\,\,\forall e\in\partial\E\right\},
\end{align*}
and
\begin{align*}
\widetilde{V}_h^K:=\{\psi_h\in \HuE :\Delta \psi_h \in \mathbb{P}_1(K),
\psi_h|_{\partial K}\in C^0(\partial K), \psi_h|_{e}\in \mathbb{P}_1(e) \  \forall e \in \partial K  \},
\end{align*}
where $\Delta^2$ represents the biharmonic operator
and we have denoted by $\mathbb{P}_k(S)$
the space of polynomials of degree up to $k$
defined on the subset $S\subseteq\R^2$.

The following conditions hold:
\begin{itemize}
\item for any $v_h\in\widetilde{W}_h^\E$ the trace on the boundary of $\E$
is continuous and on each edge is a polynomial of degree 3;
\item for any $v_h\in\widetilde{W}_h^\E$ the gradient on the boundary is continuous and on each edge its normal
(respectively tangential) component is a polynomial of degree 1 (respectively 2);
\item for any $\psi_h\in\widetilde{V}_h^\E$ the trace on the boundary of $\E$
is continuous and on each edge is a polynomial of degree 1;
\item $\mathbb{P}_2(\E)\times \mathbb{P}_1(\E)\subseteq \widetilde{W}_h^\E\times \widetilde{V}_h^\E$.
\end{itemize}

Next, with the aim to choose the degrees of freedom for both spaces,
we will introduce three sets ${\bf D_1}$, ${\bf D_2}$ and ${\bf D_3}$.
The first two sets (${\bf D_1}, {\bf D_2}$) are provided by linear operators from $\widetilde{W}_h^K$
into $\R$ and the set ${\bf D_3}$ by linear operators from $\widetilde{V}_h^K$
into $\R$. For all $(v_{h},\psi_h)\in\widetilde{W}_h^K\times \widetilde{V}_h^K$
they are defined as follows:

\begin{itemize}
\item ${\bf D_1}$ contains linear operators
evaluating $v_{h}$ at the $N_{\E}$ vertices of $\E$,
\item ${\bf D_2}$ contains linear operators evaluating
$\nabla v_h$ at the $N_{\E}$ vertices of $\E$,
\item ${\bf D_3}$ contains linear operators
evaluating $\psi_{h}$ at the $N_{\E}$ vertices of $\E$.
\end{itemize}

Note that, as a consequence of definition of the discrete spaces,
the output values of the three sets of operators ${\bf D_1}$, ${\bf D_2}$
and ${\bf D_3}$, are sufficient to uniquely determine $v_h$ and $\nabla v_h$
on the boundary of $\E$, and $\psi_h$ on the boundary of $\E$, respectively.

In order to construct the discrete scheme, we need some
preliminary definitions. First, we split the forms $a(\cdot,\cdot)$
and $b(\cdot,\cdot)$,
introduced in the previous section, as follows:

$$
a((u,\phi),(v,\psi))=\sum_{\E\in\CT_h}a^{\Delta}_{\E}(u,v)+a^{\nabla}_{\E}(\phi,\psi),
\qquad (u,\phi),(v,\phi)\in\HdoO\times\Hcu,
$$
$$
b((u,\phi),(v,\psi))=\sum_{\E\in\CT_h}b_{\E}((u,\phi),(v,\psi)),
\qquad (u,\phi),(v,\phi)\in\HdoO\times\Hcu,
$$
with
\begin{equation*}
\label{alocal}
a^{\Delta}_{\E}(u,v)
:=\int_{\E}D^2 u:\,D^2 \overline{v},
\qquad u,v\in\HdoK,
\end{equation*}
\begin{equation*}
\label{alocalnabla}
a^{\nabla}_{\E}(\phi,\psi)
:=\int_{\E}\nabla \phi\cdot\nabla\overline{\psi},
\qquad \phi,\psi\in H^1(K),
\end{equation*}
and for all $(u,\phi),(v,\phi)\in H^2(K)\times H^1(K)$,
\begin{equation*}
\label{blocal}
b_{\E}((u,\phi),(v,\psi))
:=\frac{n}{n-1} \int_\E \Delta u \overline{v}+\frac{1}{n-1}\int_\E u\Delta\overline{v}
-\int_\E \nabla \phi \cdot\nabla \overline{v}+ \frac{n}{n-1}\int_\E  u\overline{\psi}.
\end{equation*}
Now, we define the projector
$\Pi_{2}^{\Delta}:\ H^2(\E)\longrightarrow\P_2(\E)\subseteq\widetilde{W}_h^K$
for each $v\in H^2(\E)$ as the solution of 
\begin{subequations}
\begin{align}
a^\Delta_{\E}\big(\Pi_{2}^{\Delta} v,q\big)
& =a^\Delta_{\E}(v,q)
\qquad\forall q\in\P_2(\E),
\label{numero}
\\
((\Pi_{2}^{\Delta} v,q))_{\E}
&=((v,q))_{\E} \qquad\forall q\in\P_1(\E),
\label{numeroo}
\end{align}
\end{subequations}
where $((\cdot,\cdot))_{\E}$ is defined as follows:
\begin{equation*}
((u,v))_{\E}=\sum_{i=1}^{N_{\E}}u(P_i)v(P_i)\qquad\forall u,v\in C^0(\partial\E),
\end{equation*}
where $P_i, 1\le i\le N_{\E}$, are the vertices of $\E$.
We note that the bilinear form $a^\Delta_{\E}(\cdot,\cdot)$
has a non-trivial kernel, given by $\P_1(\E)$. Hence,
the role of condition \eqref{numeroo} is to select
an element of the kernel of the operator.
We observe that operator $\Pi_{2}^{\Delta}$ is well defined on $\widetilde{W}_h^K$
and, most important, for all $v\in\widetilde{W}_h^K$ the polynomial
$\Pi_{2}^{\Delta} v$ can be computed using only the values of the operators
${\bf D_1}$ and ${\bf D_2}$
calculated on $v$. This follows easily with an integration by parts
(see \cite{ABSV2016}). 

In a similar way, we define the projector
$\Pi_1^{\nabla}:\ H^1(\E)\longrightarrow\P_1(\E)\subseteq\widetilde{V}_h^K$ for
each $\psi\in H^1(\E)$ as the solution of 
\begin{subequations}
\begin{align}
a^\nabla_{\E}\big(\Pi^{\nabla}_1 \psi,q\big)
& =a^\nabla_{\E}(\psi,q)
\qquad\forall q\in\P_1(\E),
\label{numero1}
\\
(\Pi^{\nabla}_1\psi,1)_{\partial\E}
&=(\psi,1)_{\partial\E}.
\label{numeroo1}
\end{align}
\end{subequations}
We observe that operator $\Pi^{\nabla}_1$ is well defined on $\widetilde{V}_h^K$
and, as before, for all $\psi\in\widetilde{V}_h^K$ the polynomial
$\Pi^{\nabla}_1\psi$ can be computed using only the values of the operators
${\bf D_3}$ calculated on $\psi$, which follows by an integration by parts
(see \cite{AABMR13}). 

Now, we introduce our local virtual spaces: 
\begin{align*}
\WK:=\left\{v_h\in\widetilde{W}_h^K : \int_{\E}(\Pi_2^{\Delta} v_h)q=\int_{\E}v_hq\qquad\forall q\in\P_{2}(\E)\right\},
\end{align*}
and
\begin{align*}
\VK:=\left\{ \psi_h \in \widetilde{V}_h^K: \int_K (\Pi_1^{\nabla}\psi_h)q=\int_K \psi_h q
\qquad\forall q\in \P_1(\E) \right\}.
\end{align*}

It is clear that $W_h^K\times V_h^K\subseteq \widetilde{W}_h^K\times \widetilde{V}_h^K$.
Thus, the linear operators $\Pi_2^{\Delta}$ and $\Pi_1^{\nabla}$
are well defined on $W_h^K$ and $V_h^K$, respectively.

In \cite[Lemma~2.1]{ABSV2016} has been established that the
sets of operators ${\bf D_1}$ and ${\bf D_2}$ constitutes a
set of degrees of freedom for the space $\WK$.
Moreover, the set of operators ${\bf D_3}$ constitutes a
set of degrees of freedom for the space $\VK$ (see \cite{AABMR13}).

We also have that $\mathbb{P}_2(K)\times\mathbb{P}_1(K)\subseteq\WK\times\VK$.
This will guarantee the good approximation properties for the spaces.

To continue the construction of the discrete scheme,
we will need to consider new projectors:
First, we define the projector
$\Pi_2^{\nabla}:\ H^2(\E)\longrightarrow\P_2(\E)$ for
each $w\in H^2(\E)$ as the solution of 
\begin{subequations}
\begin{align}
a^\nabla_{\E}\big(\Pi^{\nabla}_2 w,q\big)
& =a^\nabla_{\E}(w,q)
\qquad\forall q\in\P_2(\E),
\label{numero2}
\\
(\Pi^{\nabla}_2 w,1)_{0,\E}
&=(w,1)_{0,\E}.
\label{numeroo2}
\end{align}
\end{subequations}
Moreover, we consider the $\LO$ orthogonal projectors
onto $\P_{l}(\E)$, $l=1,2$ as follows:
we define $\Pi_{l}^{0}:\LO\to\P_{l}(\E)$ for each $p \in \LO$ by
\begin{equation}\label{fff}
\int_{\E}(\Pi_{l}^{0} p)q=\int_{\E}pq\qquad\forall q\in\P_{l}(\E).
\end{equation}

Now, due to the particular property appearing in definition
of the space $\WK$, it can be seen that the right hand
side in \eqref{fff} is computable using $\Pi_{2}^{\Delta} v_h$,
and thus $\Pi_{2}^{0} v_h$ depends only on the values of
the degrees of freedom for $v_h$ and $\nabla v_h$.
Actually, it is easy to check that on the space
$\WK$ the projectors $\Pi_{2}^{0}v_h$ and $\Pi_{2}^{\Delta} v_h$
are the same operator. In fact:
\begin{equation}\label{L2proj}
\int_{\E}(\Pi_{2}^{0} v_h)q=\int_{\E}v_hq=
\int_{\E}(\Pi_{2}^{\Delta} v_h)q\qquad\forall q\in\P_{2}(\E).
\end{equation}
Repeating the arguments, it can be proved that
$\Pi_{1}^{0}\phi_h$ and $\Pi_{1}^{\nabla}\phi_h$
are the same operator in $\VK$.

Now, for every decomposition $\mathcal{T}_h$ of $\Omega$ into simple polygons $K$,
 we introduce our the global virtual space denoted by $Z_h$ as follow: $$Z_h:=W_h\times V_h,$$
 where
\begin{align*}
W_h:=\{v_h\in \Hcd: v_h|_{K}\in \WK\} \quad \mbox{and}\quad  
V_h:= \{\psi_h\in \Hcu: \psi_h|_{K}\in \VK\}.
\end{align*}

A set of degrees of freedom for $Z_h$ is given
by all pointwise values of $v_h$ and $\psi_h$ on all vertices of $\mathcal{T}_h$
together with all pointwise values of $\nabla v_h$ on all vertices
of $\mathcal{T}_h$, excluding the vertices on $\partial\Omega$
(where the values vanishes). Thus, the dimension
of $Z_h$ is four times the number of interior vertices of $\mathcal{T}_h$.

In what follows, we discuss the construction
of the discrete version of the local forms.
With this aim, we consider $s_{K}^{\Delta}(\cdot,\cdot)$
and $s_{K}^{\nabla}(\cdot,\cdot)$ any symmetric positive definite
forms satisfying: 
\begin{align}
c_0a_K^{\Delta}(v_h,v_h)\leq s_\E^{\Delta}(v_h,v_h)\leq c_1
a_K^{\Delta}(v_h,v_h)&\quad \forall v_h \in W_h^K\quad \mbox{with }\quad
\Pi_2^{\Delta} v_h =0 \label{term-stab-SK},\\[1ex]
c_2a_K^{\nabla}(\psi_h,\psi_h)\leq s_\E^{\nabla}(\psi_h,\psi_h)\leq
c_3 a_K^{\nabla}(\psi_h,\psi_h)&\quad \forall \psi_h \in V_h^K\quad
\mbox{with }\quad \Pi_1^{\nabla} \psi_h =0. \label{term-stab-SK0}
\end{align}

We define the discrete sesquilinear forms $a_h(\cdot,\cdot):Z_h\times Z_h\to \mathbb{C} $
and $b_h(\cdot,\cdot):Z_h\times Z_h\to \mathbb{C}$ by
\begin{align*}
a_h((u_h,\phi_h),(v_h,\psi_h))&
:=\sum_{\E\in\CT_h} a_{h,\E}^{\Delta}(u_h,v_h)+a_{h,\E}^{\nabla}(\phi_h,\psi_h)
\qquad \forall (u_h,\phi_h),(v_h,\psi_h)\in Z_h,\\
b_h((u_h,\phi_h),(v_h,\psi_h))&
:=\sum_{\E\in\CT_h}b_{h,\E}((u_h,\phi_h),(v_h,\psi_h))
\qquad \forall  (u_h,\phi_h),(v_h,\psi_h)\in Z_h,
\end{align*}
where $a_{h,\E}^{\Delta}(\cdot,\cdot)$,
$a_{h,\E}^{\nabla}(\cdot,\cdot)$ and $b_{h,\E}(\cdot,\cdot)$
are local forms on $\WK\times\WK$ and $\VK\times\VK$
defined by
\begin{eqnarray}
a_{h,\E}^{\Delta}(u_h,v_h)
:=a_{\E}^{\Delta}\big(\Pi_2^{\Delta} u_h,\Pi_2^{\Delta} v_h\big)
+s_{\E}^{\Delta}\big(u_h-\Pi_2^{\Delta} u_h,v_h-\Pi_2^{\Delta} v_h\big),
\qquad \forall u_h,v_h\in\WK,\nonumber\\
a_{h,\E}^{\nabla}(\phi_h,\psi_h)
:=a_{\E}^{\nabla}\big(\Pi_1^{\nabla} \phi_h,\Pi_1^{\nabla} \psi_h\big)
+s_{\E}^{\nabla}\big(\phi_h-\Pi_1^{\nabla} \phi_h,\psi_h-\Pi_1^{\nabla} \psi_h\big),
\qquad \forall \phi_h, \psi_h\in V_h^K,\nonumber
\end{eqnarray}
\begin{align*}
b_{h,\E}((u_h,\phi_h),(v_h,\psi_h)):&= \frac{n}{n-1}\int_K \Pi_2^{0}(\Delta u_h)  \Pi_2^{0} v_h
+\frac{1}{n-1}\int_K\Pi_2^{0} u_h \Pi_2^{0}(\Delta v_h)
- \int_K \nabla \Pi_1^{\nabla}\phi_h\cdot \nabla \Pi_2^{\nabla} v_h\\
&+ \frac{n}{n-1}\int_K \Pi_2^{0} u_h \Pi_1^{0}\psi_h 
\qquad \forall (u_h,\phi_h),(v_h,\psi_h)\in\WK\times\VK.
\end{align*}

The construction of the local sesquilinear forms
guarantees the usual consistency and stability
properties, as is stated in the proposition below.
Since the proof follows standard arguments in the VEM
literature, it is omitted.
\begin{proposition}
The local forms $a_{h,\E}^{\Delta}(\cdot,\cdot)$ and
$a_{h,\E}^{\nabla}(\cdot,\cdot)$
on each element $\E$ satisfy 
\begin{itemize}
\item \textit{Consistency}: for all $h > 0$ and for all $\E\in\CT_h$ we have that
\begin{align}
a_{h,\E}^{\Delta}(v_h,q)=a_{\E}^{\Delta}(v_h,q)\qquad\forall q \in\P_2(\E),
\quad\forall v_h\in \WK,\label{consis-a1}\\
a_{h,\E}^{\nabla}(\psi_h,q)=a_{\E}^{\nabla}(\psi_h,q)\qquad\forall q \in\P_1(\E),
\quad\forall \psi_h\in \VK.\label{consis-a2}
\end{align}
\item \textit{Stability and boundedness}: There exist positive constants
$\alpha_i, i=1,2,3,4,$ independent of $\E$, such that:
\begin{align}
\alpha_1 a_{\E}^{\Delta}(v_h,v_h)\leq a_{h,\E}^{\Delta}(v_h,v_h)
\leq\alpha_2 a_{\E}^{\Delta}(v_h,v_h)\qquad\forall v_h\in\WK,\label{stab-a1}\\
\alpha_3 a_{\E}^{\nabla}(\psi_h,\psi_h)\leq a_{h,\E}^{\nabla}(\psi_h,\psi_h)
\leq\alpha_4 a_{\E}^{\nabla}(\psi_h,\psi_h)\qquad\forall \psi_h\in\VK.\label{stab-a2}
\end{align}
\end{itemize}
\end{proposition}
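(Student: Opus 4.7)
The plan is to obtain both assertions by the now-classical VEM argument that combines polynomial reproduction of the projectors with the stability hypotheses \eqref{term-stab-SK}--\eqref{term-stab-SK0}. Throughout, I work element by element and treat the two components $a_{h,K}^{\Delta}$ and $a_{h,K}^{\nabla}$ in complete analogy; below I write the $\Delta$-case in detail.

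First, for consistency \eqref{consis-a1}, I take $q\in\P_2(K)$ and $v_h\in\WK$. Since $q\in\P_2(K)$ the defining relation \eqref{numero} together with \eqref{numeroo} yields $\Pi_2^{\Delta} q=q$, so the second summand in the definition of $a_{h,\E}^{\Delta}(v_h,q)$ vanishes:
\[
s_{\E}^{\Delta}\bigl(v_h-\Pi_2^{\Delta} v_h,\,q-\Pi_2^{\Delta} q\bigr)=s_{\E}^{\Delta}\bigl(v_h-\Pi_2^{\Delta} v_h,\,0\bigr)=0.
\]
Then \eqref{numero} gives $a_K^{\Delta}(\Pi_2^{\Delta}v_h,q)=a_K^{\Delta}(v_h,q)$, which is precisely \eqref{consis-a1}. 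The same reasoning with \eqref{numero1} delivers \eqref{consis-a2}.

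Second, for stability/boundedness \eqref{stab-a1}, the key observation is the orthogonal decomposition induced by $\Pi_2^{\Delta}$. Since $\Pi_2^{\Delta}v_h\in\P_2(K)$, taking $q=\Pi_2^{\Delta}v_h$ in \eqref{numero} shows $a_K^{\Delta}(v_h-\Pi_2^{\Delta}v_h,\Pi_2^{\Delta}v_h)=0$, so
\[
a_K^{\Delta}(v_h,v_h)=a_K^{\Delta}(\Pi_2^{\Delta}v_h,\Pi_2^{\Delta}v_h)+a_K^{\Delta}(v_h-\Pi_2^{\Delta}v_h,v_h-\Pi_2^{\Delta}v_h).
\]
Noticing moreover that $\Pi_2^{\Delta}(v_h-\Pi_2^{\Delta}v_h)=0$ (by idempotence of the projector), the hypothesis \eqref{term-stab-SK} applies to $v_h-\Pi_2^{\Delta}v_h$ and gives
\[
c_0\,a_K^{\Delta}(v_h-\Pi_2^{\Delta}v_h,v_h-\Pi_2^{\Delta}v_h)\le s_{\E}^{\Delta}(v_h-\Pi_2^{\Delta}v_h,v_h-\Pi_2^{\Delta}v_h)\le c_1\,a_K^{\Delta}(v_h-\Pi_2^{\Delta}v_h,v_h-\Pi_2^{\Delta}v_h).
\]
Substituting into $a_{h,\E}^{\Delta}(v_h,v_h)=a_K^{\Delta}(\Pi_2^{\Delta}v_h,\Pi_2^{\Delta}v_h)+s_{\E}^{\Delta}(v_h-\Pi_2^{\Delta}v_h,v_h-\Pi_2^{\Delta}v_h)$ and using the orthogonal splitting above, one gets \eqref{stab-a1} with $\alpha_1:=\min(1,c_0)$ and $\alpha_2:=\max(1,c_1)$. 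The estimates \eqref{stab-a2} follow by repeating the argument with $\Pi_1^{\nabla}$, \eqref{numero1} and \eqref{term-stab-SK0}.

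No step presents a real obstacle: the only subtlety is to verify that both projectors are well defined and idempotent on the virtual spaces, which is already ensured by the discussion surrounding \eqref{numero}--\eqref{numeroo} and \eqref{numero1}--\eqref{numeroo1} and by the inclusion $\P_2(K)\times\P_1(K)\subseteq \WK\times\VK$. This is exactly why the authors are justified in omitting the proof as standard.
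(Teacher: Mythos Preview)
Your argument is correct and is precisely the standard VEM proof that the paper alludes to when it states that ``the proof follows standard arguments in the VEM literature'' and omits it. There is nothing to add: your use of polynomial invariance of $\Pi_2^{\Delta}$ and $\Pi_1^{\nabla}$ for consistency, and of the $a_K^{\Delta}$-orthogonal splitting together with \eqref{term-stab-SK}--\eqref{term-stab-SK0} for stability (yielding $\alpha_1=\min(1,c_0)$, $\alpha_2=\max(1,c_1)$, etc.), is exactly the intended reasoning.
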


Now, we are in a position to write the virtual
element discretization of Problem~\ref{P1}.

\begin{problem}\label{P2}
Find $(\lambda_h,u_h,\psi_h)\in\mathbb{C}\times Z_h$,
$(u_h,\phi_h)\ne0$ such that
\begin{align}
a_h((u_h,\phi_h),(v_h,\psi_h))=\lambda_hb_h((u_h,\phi_h),(v_h,\psi_h)).\label{probspecdiscr}
\end{align}
\end{problem}

It is clear that by virtue of \eqref{stab-a1} and \eqref{stab-a2} the
sesquilinear form $a_{h}(\cdot,\cdot)$ is bounded.
Moreover, we will show in the following lemma that
$a_{h}(\cdot,\cdot)$ is also uniformly elliptic.
\begin{lemma}
\label{ha-elipt-disc}
There exists a constant $\beta>0$, independent of $h$, such that
$$
a_{h}((v_h,\psi_h),(v_h,\psi_h))
\ge\beta \Vert(v_h,\psi_h)\Vert^2\qquad\forall (v_h,\psi_h)\in Z_h.
$$
\end{lemma}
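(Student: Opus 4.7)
The plan is to show discrete ellipticity as a direct consequence of the local stability bounds from the preceding proposition combined with the continuous ellipticity established in Lemma~\ref{ha-elipt}. The key observation is that $Z_h$ is a conforming subspace of $\HdoO\times\Hcu$ (pointwise values of $v_h$ and $\nabla v_h$ vanish at boundary vertices, and $W_h\subset\Hcd$, $V_h\subset\Hcu$ by construction), so any continuous ellipticity estimate applies verbatim to discrete functions.

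First, I would invoke the lower bounds in \eqref{stab-a1} and \eqref{stab-a2} element by element: for every $K\in\CT_h$,
\begin{equation*}
a_{h,\E}^{\Delta}(v_h,v_h)\ge \alpha_1\,a_{\E}^{\Delta}(v_h,v_h),
\qquad
a_{h,\E}^{\nabla}(\psi_h,\psi_h)\ge \alpha_3\,a_{\E}^{\nabla}(\psi_h,\psi_h).
\end{equation*}
Summing over all $K\in\CT_h$ and using the definitions of $a_h(\cdot,\cdot)$ and of the splitting of $a(\cdot,\cdot)$ gives
\begin{equation*}
a_h((v_h,\psi_h),(v_h,\psi_h))\ge \min(\alpha_1,\alpha_3)\,a((v_h,\psi_h),(v_h,\psi_h)).
\end{equation*}

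Second, since $(v_h,\psi_h)\in Z_h\subset \HdoO\times\Hcu$, Lemma~\ref{ha-elipt} applies directly, yielding
\begin{equation*}
a((v_h,\psi_h),(v_h,\psi_h))\ge \alpha_0\,\|(v_h,\psi_h)\|^2.
\end{equation*}
Combining the two inequalities produces the desired estimate with $\beta:=\alpha_0\min(\alpha_1,\alpha_3)$, which is independent of $h$ because $\alpha_0$ depends only on $\Omega$ and $\alpha_1,\alpha_3$ are provided by the proposition as mesh-independent constants (they ultimately trace back to $C_{\CT}$ via assumptions \textbf{A1}--\textbf{A2} used in the stabilization bounds).

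There is essentially no obstacle here beyond bookkeeping: the entire work has been done in establishing the local stability bounds of the previous proposition and the continuous coercivity in Lemma~\ref{ha-elipt}. The only point worth emphasizing is the conformity $Z_h\subset \HdoO\times\Hcu$, which is what allows us to apply the continuous ellipticity to discrete functions without any modification.
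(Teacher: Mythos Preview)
Your proof is correct and follows exactly the approach indicated in the paper, which simply cites Lemma~\ref{ha-elipt} together with \eqref{stab-a1} and \eqref{stab-a2}; you have merely spelled out the summation over elements and the choice $\beta=\alpha_0\min(\alpha_1,\alpha_3)$ explicitly.
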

\begin{proof}
The result is deduced from Lemma~\ref{ha-elipt}, \eqref{stab-a1} and \eqref{stab-a2}.
\end{proof}

Now, we introduce the discrete solution operator $T_h$ which is given by
\begin{equation*}
\begin{array}{cccll}
T_h:& \Hcd\times \Hcu     & \longrightarrow & \Hcd\times \Hcu&\\
  &(f,g)& \longmapsto     & T_h(f,g)=(\tilde{u}_h,\tilde{\phi}_h)&
\end{array}
\end{equation*}
where $(\tilde{u}_h,\tilde{\phi}_h)\in  Z_h$
is the unique solution of the corresponding
discrete source problem
\begin{equation}\label{source-disc}
a_h((\tilde{u}_h,\tilde{\phi}_h),(v_h,\psi_h))=b_{h}((f,g),(v_h,\psi_h))
\qquad\forall (v_h,\psi_h)\in Z_h.
\end{equation}

Because of Lemma~\ref{ha-elipt-disc}, the linear operator $T_h$
is well defined and bounded uniformly with respect to $h$.
Once more, as in the continuous case,
$(\lambda_h,u_h,\phi_h)\in\mathbb{C}\times Z_h$ solves 
Problem~\ref{P2} if and only if $(\mu_h,u_h,\phi_h)$, with $\mu_h:=\frac1{\l_h}$,
is an eigenpair of $T_h$, i.e., $T_h(u_h,\phi_h)=\mu_h (u_h,\phi_h)$.

\setcounter{equation}{0}
\section{Spectral approximation and error estimates}
\label{SEC:approximation}

To prove that $T_h$ provides a correct spectral approximation
of $T$, we will resort to the classical
theory for compact operators (see \cite{BO}).
With this aim, we first recall the following approximation result
which is derived by interpolation
between Sobolev spaces (see for instance \cite[Theorem I.1.4]{GR}
from the analogous result for integer values of $s$).
In its turn, the result for integer values is stated
in \cite[Proposition 4.2]{BBCMMR2013} and follows from the
classical Scott-Dupont theory (see \cite{BS-2008}
and \cite[Proposition 3.1]{ABSV2016}):
\begin{proposition}
\label{app1}
There exists a constant $C>0$,
such that for every $v\in H^{\delta}(\E)$ there exists
$v_{\pi}\in\P_k(\E)$, $k\geq 0$ such that
 \begin{eqnarray*}
\vert v-v_{\pi}\vert_{\ell,\E}\leq C h_\E^{\delta-\ell}|v|_{\delta,\E}\quad 0\leq\delta\leq
k+1, \ell=0,\ldots,[\delta],
\end{eqnarray*}
with $[\delta]$ denoting largest integer equal or smaller than $\delta \in {\mathbb R}$.
\end{proposition}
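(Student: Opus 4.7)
The plan is to proceed in two stages: first establish the estimate for integer values of the smoothness parameter $\delta$, then obtain the full range of real $\delta$ by interpolation between Sobolev spaces.

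For the integer case, I would invoke the classical Scott--Dupont theory on star-shaped domains. Since each element $\E \in \CT_h$ is star-shaped with respect to a ball of radius $C_{\CT}h_\E$ by assumption \textbf{A2}, one can construct the averaged Taylor polynomial $v_\pi \in \P_k(\E)$ (of degree $k$) associated with that ball. The Bramble--Hilbert lemma, applied on the reference configuration and scaled back to $\E$ by a standard dilation argument, yields
\begin{equation*}
|v - v_\pi|_{\ell,\E} \le C\, h_\E^{m-\ell}\, |v|_{m,\E}
\end{equation*}
for every integer $m$ with $\ell \le m \le k+1$, where the constant $C$ depends only on $C_{\CT}$ and $k$ (this is precisely Proposition~4.2 of \cite{BBCMMR2013}). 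Here the dependence on the shape-regularity constant $C_{\CT}$ rather than on the particular element $\E$ is the essential point, and it is what permits a uniform estimate across the whole mesh family $\{\CT_h\}_h$.

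For non-integer $\delta \in (m, m+1)$ with $m$ an integer and $\ell \le m$, I would view the linear map
\begin{equation*}
R : v \longmapsto v - v_\pi
\end{equation*}
as acting simultaneously on $H^m(\E)$ and $H^{m+1}(\E)$, with norm bounds $C h_\E^{m-\ell}$ and $C h_\E^{m+1-\ell}$ respectively in the seminorm $|\cdot|_{\ell,\E}$. By the real (or complex) interpolation method between Sobolev spaces — exactly as in Theorem~I.1.4 of \cite{GR} — the operator $R$ is bounded from $H^{\delta}(\E) = [H^m(\E), H^{m+1}(\E)]_{\delta-m}$ to $H^\ell(\E)$ with norm controlled by the geometric mean of the two endpoint bounds, which gives the desired $h_\E^{\delta-\ell}$ scaling. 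A scaling argument to a reference domain (using \textbf{A1}, \textbf{A2} so that reference-element interpolation constants are uniform) ensures that the interpolation inequality is uniform in $\E$.

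The main obstacle I anticipate is not the integer case, which is classical Bramble--Hilbert, but rather ensuring that the interpolation step yields a constant independent of $h$ and of the particular element. This requires transferring the estimate to a family of reference elements of uniformly bounded shape, and verifying that the interpolation norm on $H^\delta$ over a star-shaped Lipschitz domain coincides with the intrinsic Sobolev-Slobodeckij norm up to constants depending only on $C_{\CT}$. Once that equivalence is in hand, the result follows by direct composition of the two ingredients cited in the statement.
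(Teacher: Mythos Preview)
Your proposal is correct and follows precisely the route the paper indicates: the integer case via the Scott--Dupont/Bramble--Hilbert argument on star-shaped polygons (Proposition~4.2 of \cite{BBCMMR2013}), followed by interpolation between Sobolev spaces as in \cite[Theorem~I.1.4]{GR} for the fractional exponents. The paper does not supply a self-contained proof but only cites these two ingredients, so your write-up in fact expands on exactly what the authors had in mind; the technical caveat you raise about uniformity of the interpolation constants over the mesh family is the only point requiring care, and your suggested resolution via scaling to a reference configuration under \textbf{A1}--\textbf{A2} is the standard one.
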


For the analysis we will introduce some broken seminorms:
$$|\psi|_{1,h}^{2}:=\sum_{\E\in\CT_h}|\psi|_{1,\E}^{2}  \qquad \mbox{and } \quad
|v|_{2,h}^{2}:=\sum_{\E\in\CT_h}|v|_{2,\E}^{2}, $$
which are well defined for every $(\psi,v)\in [L^{2}(\O)]^2$ such that
$(\psi,v)|_{\E}\in H^{1}(\E)\times H^{2}(\E)$ for all polygon $\E\in \CT_{h}$.

In what follows, we derive several auxiliary results which will be used
in the following to prove convergence and error estimates for the spectral approximation.

\begin{proposition}\label{app0}
Assume {\textbf{A1}--\textbf{A2}} are satisfied,
let $\psi\in\HutO$ with $t\in(0,1]$. Then, there exist $\psi_{I}\in\Vh$
and $C>0$ such that
$$\Vert\psi-\psi_{I}\Vert_{1,\O}\le Ch^t\vert\psi\vert_{1+t,\O}.$$
\end{proposition}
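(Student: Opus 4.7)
The plan is to build $\psi_I$ element by element from the vertex degrees of freedom $\mathbf{D_3}$, then combine the polynomial approximation of Proposition~\ref{app1} with a local stability estimate for the virtual interpolant. Since $t>0$ and $\Omega\subset\mathbb{R}^2$, the Sobolev embedding $H^{1+t}(\Omega)\hookrightarrow C^0(\overline{\Omega})$ makes the pointwise vertex values of $\psi$ well-defined (interpreting $\psi$ as having a compatible trace on $\partial\Omega$), so there is a unique $\psi_I\in V_h$ whose vertex values agree with those of $\psi$.

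On each polygon $K\in\CT_h$, I would apply Proposition~\ref{app1} with $k=1$ and $\delta=1+t$ to produce $\psi_\pi\in\mathbb{P}_1(K)$ satisfying
$$
\|\psi-\psi_\pi\|_{1,K}\le C\,h_K^{t}\,|\psi|_{1+t,K}.
$$
Because $\mathbb{P}_1(K)\subseteq V_h^K$ and vertex interpolation is the identity on $\mathbb{P}_1(K)$, we have $(\psi_\pi)_I=\psi_\pi$, hence
$$
\psi-\psi_I=(\psi-\psi_\pi)-(\psi-\psi_\pi)_I.
$$
The first summand is already controlled; everything therefore reduces to bounding $(\psi-\psi_\pi)_I$ in the $H^1(K)$ norm. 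For this I would invoke the standard VEM local stability estimate $\|v_I\|_{1,K}\le C\,\|v\|_{1+t,K}$ with constant independent of $h_K$, applied to $v=\psi-\psi_\pi$, which together with the bound above yields $\|(\psi-\psi_\pi)_I\|_{1,K}\le C h_K^{t}|\psi|_{1+t,K}$. Squaring and summing over $K\in\CT_h$ then gives the claimed global estimate.

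The hard part is the stability bound $\|v_I\|_{1,K}\le C\|v\|_{1+t,K}$ with a constant that is \emph{uniform} across the admissible class of polygons. The standard route is to scale $K$ to a reference polygon $\widehat{K}$ of unit diameter: under \textbf{A1}--\textbf{A2} the rescaled shapes form a controlled family (star-shaped with a uniform ball radius and with a uniform lower bound on edge length), the vertex evaluations are continuous linear functionals on $H^{1+t}(\widehat K)$, and on the finite-dimensional space $V_h^{\widehat K}$ the $H^1$-norm is equivalent to the $\ell^\infty$-norm of the vector of vertex values by unisolvence of $\mathbf{D_3}$. Combining these observations and rescaling back to $K$ produces the required uniform bound. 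This is the standard scaling/compactness argument used throughout the VEM literature (cf.\ \cite{BBCMMR2013,ABSV2016}), which can be adapted here without essential modification.
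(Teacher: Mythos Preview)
The paper does not actually prove this proposition; it simply cites \cite[Theorem~11]{CGPS} and \cite[Proposition~4.2]{MRR2015}. Your outline is the standard argument underlying those references: define the nodal interpolant via $\mathbf{D_3}$, subtract a local $\mathbb{P}_1$ approximant so that Proposition~\ref{app1} applies, and reduce everything to a local stability bound for the interpolation operator. In that sense your sketch already goes further than the paper itself.

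The one place where your sketch is thinner than it should be is precisely the step you flag as ``the hard part'', namely the uniform bound $\|v_I\|_{1,K}\le C\|v\|_{1+t,K}$. A pure scaling/compactness argument of finite-element type is delicate for VEM because there is no single reference element: even after rescaling to unit diameter, the admissible polygons under \textbf{A1}--\textbf{A2} vary in shape and in number of vertices, and the local shape functions of $V_h^K$ are not explicit, so the ``all norms are equivalent on a fixed finite-dimensional space'' step does not directly yield a constant uniform over the family. The references the paper cites handle this more concretely, e.g.\ via a shape-regular sub-triangulation of $K$ (guaranteed by \textbf{A1}--\textbf{A2}) together with the projector $\Pi_1^{\nabla}$ and the enhancement property, rather than by an abstract compactness argument. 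Your pointers \cite{BBCMMR2013,ABSV2016} do not quite contain this for the lowest-order $H^1$ virtual space; the correct citations are the ones the paper gives. Finally, your parenthetical remark about a ``compatible trace'' is acknowledging a genuine (if minor) imprecision in the statement: since $V_h\subset H_0^1(\Omega)$, the estimate as written requires $\psi$ to vanish on $\partial\Omega$, which is indeed the case in every use of this proposition in the paper.
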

\begin{proof}
This result has been proved in \cite[Theorem~11]{CGPS} (see also \cite[Proposition~4.2]{MRR2015}).
\end{proof}

\begin{proposition}\label{app2}
Assume {\textbf{A1}--\textbf{A2}} are satisfied,
let $v\in\HdsO$ with $s\in(0,1]$. Then, there exist $v_{I}\in\Wh$
and $C>0$ such that
$$\Vert v-v_{I}\Vert_{2,\O}\le Ch^s\vert v\vert_{2+s,\O}.$$
\end{proposition}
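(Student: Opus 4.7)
The plan is to construct the interpolant $v_I$ element-wise by matching the degrees of freedom $\mathbf{D_1}, \mathbf{D_2}$ with those of $v$, and then to estimate $\|v - v_I\|_{2,\Omega}$ through a local triangle inequality against a Scott--Dupont polynomial approximant. Since $v \in H^{2+s}(\Omega)$ with $s > 0$, Sobolev embedding gives that $v$ and $\nabla v$ are continuous, so vertex values of $v$ and of $\nabla v$ are well defined and the interpolation is meaningful. On each element $K$, I define $v_I|_K \in W_h^K$ as the unique function whose values of $\mathbf{D_1}, \mathbf{D_2}$ coincide with those of $v$; unisolvency of the DOFs on $W_h^K$ (recalled in the excerpt via \cite{ABSV2016}) makes this well defined. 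Global $H^2_0(\Omega)$-conformity follows because the boundary trace of $v_I|_K$ on each edge $e$ is the cubic polynomial determined by the vertex values, the normal derivative on $e$ is the affine polynomial determined by the normal components of the vertex gradients, and the tangential derivative on $e$ is the quadratic polynomial determined by the tangential components of the vertex gradients together with the cubic trace; since adjacent elements share these vertex data, both $v_I$ and $\nabla v_I$ match across edges, and vanish on $\partial\Omega$.

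Next, I would estimate the local error by inserting a Scott--Dupont polynomial: for each $K$, Proposition~\ref{app1} applied with $k = 2$ and $\delta = 2 + s$ produces $v_\pi \in \mathbb{P}_2(K)$ with
$$|v - v_\pi|_{\ell,K} \le C h_K^{2+s-\ell}\, |v|_{2+s,K}, \qquad \ell = 0,1,2.$$
Since $\mathbb{P}_2(K) \subset W_h^K$, one has $(v_\pi)_I = v_\pi$, hence $v_I - v_\pi = (v - v_\pi)_I$, and the triangle inequality gives
$$\|v - v_I\|_{2,K} \le \|v - v_\pi\|_{2,K} + \|(v - v_\pi)_I\|_{2,K}.$$
The first term is controlled directly by Proposition~\ref{app1}. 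The second term requires a \emph{stability estimate for the VEM interpolant}: for any sufficiently smooth $w$,
$$\|w_I\|_{2,K}^2 \le C\Bigl( \sum_{\text{vertices } P} |w(P)|^2 + h_K^2 \sum_{P} |\nabla w(P)|^2 \Bigr),$$
which follows from unisolvency and a standard scaling argument on the reference configuration, using assumptions \textbf{A1}--\textbf{A2} to transfer the reference estimate to $K$. Applying this to $w = v - v_\pi$, and then controlling the pointwise vertex evaluations of $v - v_\pi$ and $\nabla(v-v_\pi)$ by a scaled trace/Sobolev embedding (valid because $s > 0$ gives $H^{2+s}(K) \hookrightarrow C^1(\overline K)$), yields
$$\|(v - v_\pi)_I\|_{2,K} \le C\bigl( h_K^{-2}\|v-v_\pi\|_{0,K}^2 + h_K^{-1}|v-v_\pi|_{1,K}^2 + |v - v_\pi|_{2,K}^2 + h_K^{2s}|v|_{2+s,K}^2 \bigr)^{1/2},$$
up to a rearrangement of exponents that uses the scaling of trace estimates on star-shaped domains. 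Combining with the Scott--Dupont bounds above then yields $\|v - v_I\|_{2,K} \le C h_K^{s} |v|_{2+s,K}$, and summing over $K \in \mathcal{T}_h$ gives the claim.

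The main obstacle is the passage from DOF values to the $H^2(K)$ norm in the stability step: one needs the scaled equivalence between the $H^2$-seminorm of a virtual function and its prescribed degrees of freedom, which is not automatic on arbitrary polygons and genuinely requires \textbf{A1}--\textbf{A2} (star-shapedness to run a Bramble--Hilbert/Poincaré argument on the reference polygon, and edge-to-diameter control to scale trace inequalities). Once this DOF-equivalence is established, combining it with the $C^1$ Sobolev embedding for the smooth function $v - v_\pi$ gives the rate $h^s$; both ingredients are standard in the VEM literature (cf.\ \cite{ABSV2016,BM12}) but their interplay is the technical heart of the argument.
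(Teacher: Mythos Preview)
The paper does not actually give a proof of this proposition: it simply cites \cite[Proposition~3.1]{ABSV2016} and stops. Your sketch is therefore not comparable to the paper's argument but rather a reconstruction of the standard proof that underlies the cited reference. The strategy you outline---define $v_I$ by matching the degrees of freedom, insert a Scott--Dupont polynomial $v_\pi\in\mathbb{P}_2(K)$, use $(v_\pi)_I=v_\pi$ and the triangle inequality, and close with a DOF-scaled stability bound for the interpolant combined with the $C^1$ Sobolev embedding---is exactly the route taken in \cite{ABSV2016} (and more generally in the $C^1$-VEM literature, cf.\ \cite{BM12}). So your approach is correct and essentially coincides with what the paper defers to.

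One small technical remark: the stability inequality you write,
\[
\|w_I\|_{2,K}^2 \le C\Bigl(\sum_P |w(P)|^2 + h_K^2\sum_P|\nabla w(P)|^2\Bigr),
\]
is not dimensionally consistent: the dominant term $|w_I|_{2,K}^2$ on the left scales like $h_K^{-2}$ relative to the DOF sum on the right. The correct version carries an overall factor of $h_K^{-2}$ on the right (equivalently, one writes $h_K^2|w_I|_{2,K}^2+\cdots$ on the left). You already flag this as the step requiring ``a rearrangement of exponents,'' and once the scaling is fixed the remainder of your argument closes exactly as you describe.
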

\begin{proof}
This result has been establish in \cite[Proposition~3.1]{ABSV2016}.
\end{proof}

Now, we establish a result which will be
useful to prove the convergence of the operator
$T_h$ to $T$ as $h$ goes to zero.

\begin{lemma}
\label{lemcotste}
There exists $C>0$ independent of $h$ such that
for all $(f,g)\in \Hcd\times\Hcu$, if $(\tilde{u},\tilde{\phi}):=T(f,g)$ and
$(\tilde{u}_h,\tilde{\phi}_h):=T_h(f,g)$, then
\begin{align*}
\left\|\left(T-T_h\right)(f,g)\right\|\leq C  h \Vert(f,g)\Vert + \vert\tilde{u}-\tilde{u}_{I}\vert_{2,\O}
+\vert \tilde{u}-\tilde{u}_{\pi} \vert_{2,h} + \vert\tilde{\phi} - \tilde{\phi}_{I}\vert_{1,\O}
+\vert \tilde{\phi} - \tilde{\phi}_{\pi}\vert_{1,h},
\end{align*}
for all $(\tilde{u}_I,\tilde{\phi}_I)\in Z_h$
and for all $(\tilde{u}_{\pi},\tilde{\phi}_{\pi})\in[\LO]^2$
such that $(\tilde{u}_{\pi},\tilde{\phi}_{\pi})|_K\in \mathbb{P}_2(K)\times \mathbb{P}_1(K)$.
\end{lemma}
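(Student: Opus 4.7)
The plan is to proceed with the classical VEM triangle-inequality/ellipticity scheme. I would set $\chi:=\tilde{u}_h-\tilde{u}_I$ and $\eta:=\tilde{\phi}_h-\tilde{\phi}_I$, so that $(\chi,\eta)\in Z_h$, and write
$$
\|(T-T_h)(f,g)\|\le \|(\tilde{u}-\tilde{u}_I,\tilde{\phi}-\tilde{\phi}_I)\|+\|(\chi,\eta)\|.
$$
The first summand is already bounded by $|\tilde{u}-\tilde{u}_I|_{2,\Omega}+|\tilde{\phi}-\tilde{\phi}_I|_{1,\Omega}$ (Poincaré-type inequalities on $H_0^2(\Omega)$ and $H_0^1(\Omega)$), so it remains to control $\|(\chi,\eta)\|$.

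For this, I would use the discrete ellipticity of Lemma~\ref{ha-elipt-disc} to get $\beta\|(\chi,\eta)\|^{2}\le a_h((\chi,\eta),(\chi,\eta))$, and then expand by subtracting and adding the continuous equation $a((\tilde{u},\tilde{\phi}),(\chi,\eta))=b((f,g),(\chi,\eta))$, obtaining
$$
a_h((\chi,\eta),(\chi,\eta))=\bigl[a((\tilde{u},\tilde{\phi}),(\chi,\eta))-a_h((\tilde{u}_I,\tilde{\phi}_I),(\chi,\eta))\bigr]+\bigl[b_h((f,g),(\chi,\eta))-b((f,g),(\chi,\eta))\bigr].
$$
I would treat the two brackets separately. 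For the $a$-bracket, I insert the polynomial approximations $\tilde{u}_\pi$, $\tilde{\phi}_\pi$ element by element and use the consistency properties \eqref{consis-a1}, \eqref{consis-a2} to remove $a_{h,\E}^\Delta(\tilde{u}_\pi,\cdot)=a_{\E}^\Delta(\tilde{u}_\pi,\cdot)$ and similarly for $a^\nabla$; the stability bounds \eqref{stab-a1}, \eqref{stab-a2} together with the triangle inequality $|\tilde{u}_\pi-\tilde{u}_I|_{2,\E}\le|\tilde{u}-\tilde{u}_\pi|_{2,\E}+|\tilde{u}-\tilde{u}_I|_{2,\E}$ deliver
$$
\bigl|a((\tilde{u},\tilde{\phi}),(\chi,\eta))-a_h((\tilde{u}_I,\tilde{\phi}_I),(\chi,\eta))\bigr|\le C\bigl(|\tilde{u}-\tilde{u}_I|_{2,\Omega}+|\tilde{u}-\tilde{u}_\pi|_{2,h}+|\tilde{\phi}-\tilde{\phi}_I|_{1,\Omega}+|\tilde{\phi}-\tilde{\phi}_\pi|_{1,h}\bigr)\,\|(\chi,\eta)\|.
$$

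The hard step is the $b$-bracket: I must show $|b_h((f,g),(\chi,\eta))-b((f,g),(\chi,\eta))|\le Ch\,\|(f,g)\|\,\|(\chi,\eta)\|$ using only $\|(f,g)\|=\|f\|_{2,\Omega}+\|g\|_{1,\Omega}$. Working term by term on each $K$, the two $L^2$-type pieces are handled by the self-adjointness of $\Pi_\ell^{0}$: writing $\int_K\Delta f\,\bar\chi-\int_K\Pi_2^{0}(\Delta f)\,\Pi_2^{0}\chi=\int_K\Delta f\,(\bar\chi-\Pi_2^{0}\bar\chi)$ and the analogous identity for the $f\Delta v$ and $f\psi$ contributions, then bounding by Cauchy–Schwarz and the standard $L^2$-approximation of $\Pi_\ell^0$, which costs one power of $h$ (two powers for the $\int f\,\Delta v$ term, which is even better). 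The genuinely delicate piece is the gradient term $\int_K\nabla g\cdot\nabla\bar\chi-\int_K\nabla\Pi_1^{\nabla}g\cdot\nabla\Pi_2^{\nabla}\chi$, because $g$ has no more than $H^1$ regularity. I would pick any $p\in\P_1(K)$ approximating $\chi$ and use the defining property of $\Pi_1^{\nabla}$ (which kills the piece tested on $\nabla p$) to rewrite the difference as $\int_K\nabla g\cdot\nabla(\chi-p)-\int_K\nabla\Pi_1^{\nabla}g\cdot\nabla(\Pi_2^{\nabla}\chi-p)$. Bounding both factors via $|g|_{1,K}$ (using the $H^1$-stability of $\Pi_1^{\nabla}$), Scott–Dupont applied to $\chi\in H^2(K)$ for $p$, and the approximation $|\chi-\Pi_2^{\nabla}\chi|_{1,K}\le Ch_K|\chi|_{2,K}$, yields $Ch_K|g|_{1,K}|\chi|_{2,K}$.

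Summing element contributions, applying Cauchy–Schwarz globally, dividing by $\|(\chi,\eta)\|$ and combining with the triangle-inequality step delivers the stated bound. The main obstacle, as indicated, is producing the $h$-factor in the consistency error for $b_h$ on the gradient piece without assuming more than $g\in H_0^1(\Omega)$.
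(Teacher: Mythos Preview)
Your proposal is correct and follows essentially the same scheme as the paper's proof: triangle inequality, discrete ellipticity on $(\chi,\eta)=(\tilde u_h-\tilde u_I,\tilde\phi_h-\tilde\phi_I)$, insertion of the piecewise polynomials via consistency/stability for the $a$-part, and a term-by-term $O(h)$ bound on the $b_h-b$ consistency error. The only cosmetic difference is in the gradient piece $E_{13}$: the paper first uses the definition of $\Pi_2^\nabla$ (noting $\Pi_1^\nabla g\in\P_1\subset\P_2$) to replace $\nabla\Pi_2^\nabla\chi$ by $\nabla\chi$, and then the orthogonality of $\Pi_1^\nabla g-g$ to $\nabla\P_1$ to insert a $\tilde v_\pi\in\P_1(K)$, whereas you keep $\Pi_2^\nabla\chi$ and insert $p\in\P_1(K)$ directly via the $\Pi_1^\nabla$ orthogonality; both manipulations lead to the same bound $Ch_K|g|_{1,K}|\chi|_{2,K}$.
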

\begin{proof}
Let $(f,g)\in \Hcd\times\Hcu$, for any $(\tilde{u}_I,\tilde{\phi}_I)\in\Wh\times\Vh$, we have,
\begin{align}
\Vert(T-T_h)(f,g)\Vert \leq \Vert(\tilde{u},\tilde{\phi})-(\tilde{u}_I,\tilde{\phi}_I)\Vert
+\Vert(\tilde{u}_I,\tilde{\phi}_I)-(\tilde{u}_h,\tilde{\phi}_h)\Vert.\label{estim-oper}
\end{align}
Now, we define $(v_h,\psi_h)=(\tilde{u}_h-\tilde{u}_I,\tilde{\phi}_h-\tilde{\phi}_I)\in Z_h$,
then from the ellipticity of $a_h(\cdot,\cdot)$ 
and the definition of $T$ and $T_h$, we have
\begin{align}
&\beta\Vert(v_h,\psi_h)\Vert^2 \leq a_h((v_h,\psi_h),(v_h,\psi_h))
=a_h((\tilde{u}_h,\tilde{\phi}_h),(v_h,\psi_h))-a_h((\tilde{u}_I,\tilde{\phi}_I),(v_h,\psi_h))\nonumber\\[1ex]
&=b_h((f,g),(v_h,\psi_h))-\sumkth\Big\{a_{h,K}^{\Delta}(\tilde{u}_I,v_h)
+a_{h,K}^{\nabla}(\tilde{\phi}_I,\psi_h)\Big\}\nonumber\\[1ex]
&=b_h((f,g),(v_h,\psi_h))-\sumkth\Big\{a_{h,K}^{\Delta}(\tilde{u}_I-\tilde{u}_{\pi},v_h)+a_{h,K}^{\Delta}(\tilde{u}_{\pi},v_h)
+a_{h,K}^{\nabla}(\tilde{\phi}_I-\tilde{\phi}_{\pi},\psi_h)+a_{h,K}^{\nabla}(\tilde{\phi}_{\pi},\psi_h) \Big\}\nonumber\\[1ex]
&=b_h((f,g),(v_h,\psi_h))-\sumkth\Big\{a_{h,K}^{\Delta}(\tilde{u}_I-\tilde{u}_{\pi},v_h)
+a_{K}^{\Delta}(\tilde{u}_{\pi}-\tilde{u},v_h)+a_{K}^{\Delta}(\tilde{u},v_h)\nonumber\\[1ex]
&\hspace{5.0cm}+a_{h,K}^{\nabla}(\tilde{\phi}_I-\tilde{\phi}_{\pi},\psi_h)
+a_{K}^{\nabla}(\tilde{\phi}_{\pi}-\tilde{\phi},\psi_h)+a_{K}^{\nabla}(\tilde{\phi},\psi_h) \Big\}\nonumber\\[1ex]
&=\underbrace{\sumkth \Big\{ b_{h,K}((f,g ),(v_h,\psi_h))-b_K((f,g),(v_h,\psi_h))\Big\}}_{E_1}
-\underbrace{\sumkth\Big\{a_{h,K}^{\Delta}(\tilde{u}_I-\tilde{u}_{\pi},v_h)
+a_{K}^{\Delta}(\tilde{u}_{\pi}-\tilde{u},v_h)\Big\}}_{E_2}\nonumber \\[1ex]
&- \underbrace{\sumkth \Big\{ a_{h,K}^{\nabla}(\tilde{\phi}_I-\tilde{\phi}_{\pi},\psi_h)
+a_{K}^{\nabla}(\tilde{\phi}_{\pi}-\tilde{\phi},\psi_h) \Big\}}_{E_3}\label{cotconv},
\end{align}
where we have used the consistency properties \eqref{consis-a1}-\eqref{consis-a2}.
We now bound each term $E_i|_K$, $i = 1,2,3$.

First, the term $E_1|_K$ can be written as follows:
\begin{align}
& b_{h,K}((f,g),(v_h,\psi_h))-b_K((f,g),(v_h,\psi_h))\nonumber\\[1ex]
&=\frac{n}{n-1}\Big\{\underbrace{\int_K \Pi_2^{0}(\Delta f)\Pi_2^{0}v_h-\int_K \Delta f \overline{v_h}}_{E_{11}} \Big\}
+\frac{1}{n-1}\Big\{\underbrace{\int_K \Pi_2^{0}f \Pi_2^{0}(\Delta v_h)
-\int_K f \Delta \overline{v_h}}_{E_{12}} \Big\}\nonumber\\[1ex]
& -\Big\{\underbrace{\int_K \nabla \Pi_1^{\nabla}g \cdot \nabla \Pi_2^{\nabla}v_h
- \int_K \nabla g \cdot \nabla \overline{v_h}}_{E_{13}} \Big\}
+ \frac{n}{n-1}\Big\{\underbrace{\int_K (\Pi_2^{0}f)(\Pi_1^{0}\psi_h)
-\int_K f \overline{\psi_h}}_{E_{14}} \Big\}.\label{exp-E_1}
\end{align}

Now, we will bound each term $E_{1i}|_K \ i=1,2,3,4$.
The term $E_{11}$ can be bounded as follows:
Using the definition of $\Pi_2^{0}$ and
Proposition~\ref{app1}, we have
\begin{align*}
E_{11}&=\int_K  \Delta f(\overline{v_h}-\Pi_2^{0}v_h)\leq |f |_{2,K} ||v_h-\Pi_2^{0}v_h||_{0,K}\\
&=|f |_{2,K} \inf_{q\in \mathbb{P}_2(K)}||v_h-q||_{0,K}\leq Ch_\E^{2}|f|_{2,K}|v_h|_{2,K}.
\end{align*}

For the term $E_{12}$, we repeat the same arguments to obtain:
\begin{align*}
E_{12}\leq Ch_K^2|f |_{2,K}|v_h|_{2,K}.
\end{align*}

Now, we bound $E_{13}$. From the definition of $\Pi_2^{\nabla}$, we have
\begin{align*}
E_{13}&=\int_K \nabla \Pi_1^{\nabla}g \cdot \nabla \overline{v_h}
- \int_K \nabla g \cdot \nabla \overline{v_h}=\int_K \nabla(\Pi_1^{\nabla}g -g )\cdot \nabla \overline{v_h}\\[1ex]
&=\int_K\nabla(\Pi_1^{\nabla}g -g )\cdot \nabla (\overline{v_h} -\tilde{v}_{\pi})\leq
|\Pi_1^{\nabla}g -g  |_{1,K}|v_h -\tilde{v}_{\pi} |_{1,K}\\[1ex]
&\leq Ch_K|g  |_{1,K} |v_h |_{2,K},
\end{align*}
where we have used the definition and the stability of $\Pi_1^{\nabla}$
with $\tilde{v}_{\pi}\in \mathbb{P}_1(K)$ such that
Proposition~\ref{app1} holds true.

For the term $E_{14}$, we first use the definition of $\Pi_2^{0}$,
the definition and the stability of $\Pi_1^{0}$
with respect to $\hat{f}_{\pi}\in \mathbb{P}_1(K)$ such that
Proposition~\ref{app1} holds true, thus, we have
\begin{align*}
E_{14}&=\int_K f \Pi_1^{0}\psi_h-\int_K f\overline{\psi_h}=\int_K(f-\hat{f}_{\pi})(\Pi_1^{0}\psi_h-\overline{\psi_h})\\[1ex]
&\leq Ch_K^{2}|f |_{2,K}||\Pi_1^{0}\psi_h -\psi_h ||_{0,K}
\leq Ch_K^{2} | f|_{2,K}\Vert\psi_h\Vert_{0,\E}.
\end{align*}

Therefore, using the Cauchy-Schwarz inequality, we can deduce from \eqref{exp-E_1} that
\begin{align*}
E_1\leq Ch||(f,g ) || ||(v_h,\psi_h) ||.
\end{align*}

Finally, from \eqref{cotconv} we have
\begin{align*}
\beta ||(v_h,\psi_h) ||\leq C \Big\{ h||(f,g ) ||  +| u-u_I|_{2,\O} + | u-u_{\pi}|_{2,h}
+| \phi-\phi_I|_{1,\O} + |\phi-\phi_{\pi} |_{1,h}  \Big\}.
\end{align*}
Therefore, the proof follows from \eqref{estim-oper} and the previous inequality.
\end{proof}

For the convergence and error analysis of the proposed virtual element
scheme for the transmission eigenvalue problem, we first
establish that $T_h\to T$ in norm as $h\to0$.
Then, we prove a similar convergence result
for the adjoint operators $T^*$ and $T_h^*$
of $T$ and $T_h$, respectively.

\begin{lemma}\label{propP1}
There exist $C>0$ and $\tilde{s}\in(0,1]$, independent of $h$, such that
$$\|T-T_h\|\le Ch^{\tilde{s}}.$$
\end{lemma}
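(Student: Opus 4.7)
The plan is to combine Lemma~\ref{lemcotste} with the regularity result of Lemma~\ref{lem_regul} and the approximation estimates provided by Propositions~\ref{app0}, \ref{app2} and \ref{app1}. Fix $(f,g)\in\Hcd\times\Hcu$ with $\Vert(f,g)\Vert=1$, and set $(\tilde{u},\tilde{\phi}):=T(f,g)$, $(\tilde{u}_h,\tilde{\phi}_h):=T_h(f,g)$. By Lemma~\ref{lem_regul}, there exist $s,t\in(1/2,1]$, depending only on $\Omega$, such that $\tilde{u}\in\HdsO$, $\tilde{\phi}\in\HutO$, with
\begin{equation*}
\Vert\tilde{u}\Vert_{2+s,\Omega}+\Vert\tilde{\phi}\Vert_{1+t,\Omega}\le C\Vert(f,g)\Vert.
\end{equation*}

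The next step is to produce, element by element and globally, the approximants needed in Lemma~\ref{lemcotste}. Proposition~\ref{app2} supplies $\tilde{u}_I\in\Wh$ with $\vert\tilde{u}-\tilde{u}_I\vert_{2,\O}\le Ch^{s}\vert\tilde{u}\vert_{2+s,\O}$, and Proposition~\ref{app0} supplies $\tilde{\phi}_I\in\Vh$ with $\vert\tilde{\phi}-\tilde{\phi}_I\vert_{1,\O}\le Ch^{t}\vert\tilde{\phi}\vert_{1+t,\O}$. For the piecewise polynomial approximations, Proposition~\ref{app1} applied on each $\E\in\CT_h$ with $k=2$, $\delta=2+s$, $\ell=2$ yields $\tilde{u}_\pi|_\E\in\P_2(\E)$ such that $\vert\tilde{u}-\tilde{u}_\pi\vert_{2,h}\le Ch^{s}\vert\tilde{u}\vert_{2+s,\O}$, and with $k=1$, $\delta=1+t$, $\ell=1$ it yields $\tilde{\phi}_\pi|_\E\in\P_1(\E)$ such that $\vert\tilde{\phi}-\tilde{\phi}_\pi\vert_{1,h}\le Ch^{t}\vert\tilde{\phi}\vert_{1+t,\O}$.

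Substituting all four estimates into Lemma~\ref{lemcotste} and using the regularity bound gives
\begin{equation*}
\Vert(T-T_h)(f,g)\Vert\le C\bigl(h+h^{s}+h^{t}\bigr)\Vert(f,g)\Vert.
\end{equation*}
Setting $\tilde{s}:=\min\{s,t\}\in(1/2,1]$ and noting that for $h$ bounded (we may assume $h\le 1$, since otherwise the claim is trivial by the uniform boundedness of $T$ and $T_h$) we have $h\le h^{\tilde s}$, we obtain $\Vert(T-T_h)(f,g)\Vert\le Ch^{\tilde{s}}\Vert(f,g)\Vert$. Taking the supremum over the unit ball in $\Hcd\times\Hcu$ yields the operator-norm bound.

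There is no serious obstacle here: everything is assembly of existing pieces. The only mild care required is in choosing the interpolation/projection exponents in Proposition~\ref{app1} so that the seminorms appearing in Lemma~\ref{lemcotste} match (in particular, $\ell=2$ for $\tilde{u}$ and $\ell=1$ for $\tilde{\phi}$, which is why the regularity gain $s,t>0$ supplied by Lemma~\ref{lem_regul} is essential—without it the seminorms $\vert\tilde{u}-\tilde{u}_\pi\vert_{2,h}$ and $\vert\tilde{\phi}-\tilde{\phi}_\pi\vert_{1,h}$ would not go to zero), and in recognizing that the $h$ term in Lemma~\ref{lemcotste} does not spoil the rate because $\tilde{s}\le 1$.
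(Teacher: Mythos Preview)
Your proof is correct and follows essentially the same approach as the paper: apply Lemma~\ref{lemcotste}, bound the interpolation and piecewise polynomial approximation terms via Propositions~\ref{app0}, \ref{app2} and \ref{app1} using the regularity from Lemma~\ref{lem_regul}, and set $\tilde{s}=\min\{s,t\}$. Your write-up is in fact more explicit than the paper's about the choice of exponents in Proposition~\ref{app1} and about why the $h$ term is harmless.
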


\begin{proof}
Let $(f,g)\in\Hcd\times\Hcu$ such that $||(f,g)||=1$,
let $(\tilde{u},\tilde{\phi})$ and $(\tilde{u}_h,\tilde{\phi}_h)$
be the solution of problems \eqref{source} and \eqref{source-disc},
respectively, so that $(\tilde{u},\tilde{\phi}):=T(f,g)$ and
$(\tilde{u}_h,\tilde{\phi}_h):=T_h(f,g)$.
From Lemma~\ref{lemcotste}, we have
\begin{align*}
\left\|\left(T-T_h\right)(f,g )\right\|& \leq C  h ||(f,g )|| + \Vert u-u_{I}\Vert_{2,\O}
+\vert u-u_{\pi} \vert_{2,h} + \Vert \phi - \phi_{I}\Vert_{1,\O}
+\vert \phi - \phi_{\pi}\vert_{1,h}\\[1ex]
&\leq C\left(  h||(f,g ) ||+ h^s||f||_{2,\O} + h^t||g ||_{1,\O}\right)\\
&\le Ch^{\tilde{s}}|| (f,g )||\nonumber
\end{align*}
where we have used the Propositions~\ref{app1}, \ref{app0} and \ref{app2},
and  Lemma~\ref{lem_regul}, with $\tilde{s}:=\min\{s,t\}$.
Thus, we conclude the proof.
\end{proof}

Let $T^*$ and $T_h^*:\Hcd\times\Hcu\to\Hcd\times\Hcu$
the adjoint operators of $T$ and $T_h$, respectively,
defined by $T^*(f,g):=(\tilde{u}^*,\tilde{\phi}^*)$
and $T^*_h(f,g):=(\tilde{u}_h^*,\tilde{\phi}_h^*)$,
where $(\tilde{u}^*,\tilde{\phi}^*)$ and $(\tilde{u}_h^*,\tilde{\phi}_h^*)$ are
the unique solutions of the following problems:
\begin{align}
& a((v,\psi),(\tilde{u}^*,\tilde{\phi}^*))=b((v,\psi),(f,g))\qquad \forall
(v,\psi)\in \Hcd\times\Hcu,\label{AdjCont}\\[1ex]
& a_h((v_h,\psi_h),(\tilde{u}_h^*,\tilde{\phi}_h^*))=b_h((v_h,\psi_h),(f,g))\qquad
\forall (v_h,\psi_h)\in Z_h.\label{AdjDisc} 
\end{align}

It is simple to prove that if $\mu$ is an eigenvalue
of $T$ with multiplicity $m$, $\overline{\mu}$ is an eigenvalue
of $T^{*}$ with the same multiplicity $m$.

Now, we will study the convergence
in norm $T_h^{*}$ to $T^{*}$ as $h$ goes to zero.
With this aim, first we establish
an additional regularity result
for the solution $(\tilde{u}^*,\tilde{\phi}^*)$
of problem~\eqref{AdjCont}.

\begin{lemma}\label{lem_regul_adj}
There exist $s,t\in(1/2,1]$ and $C>0$ such that,
for all $(f,g)\in \HdoO\times\Hcu$, the solution
$(\tilde{u}^*,\tilde{\phi}^*)$ of problem~\eqref{AdjCont}
satisfies $\tilde{u}^*\in H^{2+s}(\Omega)$, $\tilde{\phi}^*\in H^{1+t}(\Omega)$, and
$$\Vert\tilde{u}^*\Vert_{2+s,\Omega}+\Vert\tilde{\phi}^*\Vert_{1+t,\Omega}\le C\Vert(f,g)\Vert.$$
\end{lemma}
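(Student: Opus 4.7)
The proof mirrors that of Lemma~\ref{lem_regul}, taking advantage of the block structure of $a(\cdot,\cdot)$ and $b(\cdot,\cdot)$, which makes the adjoint problem~\eqref{AdjCont} decouple in a particularly convenient way when tested separately against $(0,\psi)$ and $(v,0)$. I would handle the component $\tilde{\phi}^*$ first, because it turns out to be pinned down essentially explicitly, and then reduce the equation for $\tilde{u}^*$ to a biharmonic problem with $H^{-1}(\O)$ right-hand side, exactly as in Lemma~\ref{lem_regul}.

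First I would test~\eqref{AdjCont} with $(0,\psi)$ for $\psi\in\Hcu$. Since
$a((0,\psi),(\tilde{u}^*,\tilde{\phi}^*))=\int_\O\nabla\psi\cdot\nabla\overline{\tilde{\phi}^*}$
and $b((0,\psi),(f,g))=-\int_\O\nabla\psi\cdot\nabla\overline{f}$, one obtains
$$\int_\O\nabla\psi\cdot\nabla\overline{(\tilde{\phi}^*+f)}=0\qquad\forall\psi\in\Hcu.$$
Since $f\in\Hcd\subset\Hcu$, the function $\tilde{\phi}^*+f$ lies in $\Hcu$; taking $\psi=\overline{\tilde{\phi}^*+f}$ yields $\nabla(\tilde{\phi}^*+f)=0$, hence $\tilde{\phi}^*=-f$. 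This gives the much stronger conclusion $\tilde{\phi}^*\in\Hcd$, so in particular $\tilde{\phi}^*\in H^{1+t}(\O)$ for any $t\in(1/2,1]$ with $\Vert\tilde{\phi}^*\Vert_{1+t,\O}\le\Vert f\Vert_{2,\O}\le\Vert(f,g)\Vert$.

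Next I would test with $(v,0)$ for $v\in\Hcd$, obtaining
$$\frac{1}{n-1}\int_\O D^2 v:D^2\overline{\tilde{u}^*}=\frac{n}{n-1}\int_\O\Delta v\,\overline{f}+\frac{1}{n-1}\int_\O v\,\Delta\overline{f}+\frac{n}{n-1}\int_\O v\,\overline{g}.$$
Because $f,v\in\Hcd$, Green's identity yields $\int_\O\Delta v\,\overline{f}=\int_\O v\,\Delta\overline{f}$ (all boundary contributions vanish). The right-hand side therefore reduces to $\int_\O v\,\overline{F}$, where $F:=\frac{n+1}{n-1}\Delta f+\frac{n}{n-1}g\in\LO$ with $\Vert F\Vert_{0,\O}\le C\Vert(f,g)\Vert$. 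Thus $\tilde{u}^*\in\Hcd$ solves a biharmonic problem whose right-hand side, seen as an element of $\LO\hookrightarrow H^{-1}(\O)$, is controlled by $\Vert(f,g)\Vert$. The same classical regularity result for the biharmonic equation with $H^{-1}$ data and homogeneous Dirichlet conditions cited in Lemma~\ref{lem_regul} (cf.~\cite{G}) then gives $\tilde{u}^*\in H^{2+s}(\O)$ with the same exponent $s\in(1/2,1]$ and $\Vert\tilde{u}^*\Vert_{2+s,\O}\le C\Vert(f,g)\Vert$.

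The only mildly delicate step is recognizing that the first term on the right-hand side of the equation for $\tilde{u}^*$, which a priori is only a functional on $\Hcd$ via $v\mapsto\int\Delta v\,\overline{f}$, in fact defines an element of $H^{-1}(\O)$; this is precisely where the hypothesis $f\in\Hcd$ (rather than merely $\LO$) is used, through the Green identity above. Once this reduction is in place, both regularity bounds are immediate consequences of the elliptic results recalled in Remark~\ref{rem_regul}.
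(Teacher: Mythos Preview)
Your proof is correct and follows essentially the same route as the paper, which simply refers back to Lemma~\ref{lem_regul}: decouple the adjoint system by testing with $(0,\psi)$ and $(v,0)$, reduce to a biharmonic problem for $\tilde{u}^*$ with $H^{-1}(\O)$ data, and invoke the classical regularity results recalled in Remark~\ref{rem_regul}. Your explicit identification $\tilde{\phi}^*=-f$ is in fact sharper than what the paper needs (one small slip: to conclude $\nabla(\tilde{\phi}^*+f)=0$ you should test with $\psi=\tilde{\phi}^*+f$ rather than its conjugate, so that the integrand becomes $|\nabla(\tilde{\phi}^*+f)|^2$).
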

\begin{proof}
The result follows repeating the same arguments
used in the proof of Lemma~\ref{lem_regul}.
\end{proof}

\begin{remark}
We note that the constants $s$ and $t$
in Lemma~\ref{lem_regul_adj} are the same
as in Lemma~\ref{lem_regul}.
\end{remark}

Now, we are in a position to establish the following result.
\begin{lemma}\label{propP1adj}
There exist $C>0$ and $\tilde{s}\in(0,1]$, independent of $h$, such that
$$\|T^*-T_h^*\|\le Ch^{\tilde{s}}.$$
\end{lemma}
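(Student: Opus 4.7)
The plan is to mirror the proof of Lemma~\ref{propP1} in the adjoint setting by first establishing an analogue of Lemma~\ref{lemcotste} for $T^*$ and $T_h^*$. The structural input is that the discrete ellipticity constant from Lemma~\ref{ha-elipt-disc} is symmetric in both arguments (being a diagonal form), that $a$, $a_h$, $b$, $b_h$ are all sesquilinear, and that the consistency relations \eqref{consis-a1}--\eqref{consis-a2} are already stated with polynomials in the second slot, which is precisely the slot occupied by $(\tilde{u}^*,\tilde{\phi}^*)$ and $(\tilde{u}^*_h,\tilde{\phi}^*_h)$ in \eqref{AdjCont}--\eqref{AdjDisc}. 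This makes the adjoint argument essentially parallel, with no need to invoke conjugate-symmetry tricks.

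Given $(f,g)\in\Hcd\times\Hcu$ with $\|(f,g)\|=1$, introduce a virtual interpolant $(\tilde{u}^*_I,\tilde{\phi}^*_I)\in Z_h$ of $(\tilde{u}^*,\tilde{\phi}^*)$ and a piecewise polynomial approximant $(\tilde{u}^*_\pi,\tilde{\phi}^*_\pi)$ with $(\tilde{u}^*_\pi,\tilde{\phi}^*_\pi)|_K\in\mathbb{P}_2(K)\times\mathbb{P}_1(K)$. Writing the error split $\|(T^*-T_h^*)(f,g)\|\le\|(\tilde{u}^*-\tilde{u}^*_I,\tilde{\phi}^*-\tilde{\phi}^*_I)\|+\|(\tilde{u}^*_h-\tilde{u}^*_I,\tilde{\phi}^*_h-\tilde{\phi}^*_I)\|$, I control the second summand by setting $(v_h,\psi_h):=(\tilde{u}^*_h-\tilde{u}^*_I,\tilde{\phi}^*_h-\tilde{\phi}^*_I)$, applying Lemma~\ref{ha-elipt-disc}, and then rewriting $a_h((v_h,\psi_h),(v_h,\psi_h))$ as $a_h((v_h,\psi_h),(\tilde{u}^*_h,\tilde{\phi}^*_h))-a_h((v_h,\psi_h),(\tilde{u}^*_I-\tilde{u}^*_\pi,\tilde{\phi}^*_I-\tilde{\phi}^*_\pi))-a_h((v_h,\psi_h),(\tilde{u}^*_\pi,\tilde{\phi}^*_\pi))$. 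Using \eqref{AdjDisc} on the first piece and consistency \eqref{consis-a1}--\eqref{consis-a2} together with \eqref{AdjCont} on the third piece produces exactly the three error contributions $E_1,E_2,E_3$ of \eqref{cotconv}, with the roles of the two arguments interchanged.

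The bounds on these three contributions are obtained verbatim from the proof of Lemma~\ref{lemcotste}: the $b_h-b$ discrepancy is controlled by $Ch\,\|(f,g)\|\,\|(v_h,\psi_h)\|$ using the projector definitions and Proposition~\ref{app1}, while the $a_h$- and $a$-type terms are dominated by $C(|\tilde{u}^*-\tilde{u}^*_I|_{2,\Omega}+|\tilde{u}^*-\tilde{u}^*_\pi|_{2,h}+|\tilde{\phi}^*-\tilde{\phi}^*_I|_{1,\Omega}+|\tilde{\phi}^*-\tilde{\phi}^*_\pi|_{1,h})\,\|(v_h,\psi_h)\|$ via boundedness of the discrete forms. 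I then invoke Lemma~\ref{lem_regul_adj} to obtain $\|\tilde{u}^*\|_{2+s,\Omega}+\|\tilde{\phi}^*\|_{1+t,\Omega}\le C\|(f,g)\|$, and apply Propositions~\ref{app0} and~\ref{app2} to the virtual interpolant together with Proposition~\ref{app1} to the piecewise polynomial approximant. Combining all estimates and taking the supremum over $\|(f,g)\|=1$ yields the claim with $\tilde{s}:=\min\{s,t\}$.

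The only delicate point, and what I regard as the main obstacle, is verifying that swapping arguments does not spoil the estimate of the $b_h-b$ term: in Lemma~\ref{lemcotste} the bounds on $E_{11}$--$E_{14}$ exploited that the $L^2$-projections were applied to the $(f,g)$-slot, whereas here they sit in the $(v_h,\psi_h)$-slot. Inspecting the definition of $b_{h,\E}$ shows that each term is a product of two projections (one on each argument), so the same cancellation that led to the factors $h^2|f|_{2,K}|v_h|_{2,K}$ and $h|g|_{1,K}|v_h|_{2,K}$ carries over after relabeling, and the resulting bound $Ch\,\|(f,g)\|\,\|(v_h,\psi_h)\|$ still holds. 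With this verified, the remainder of the argument is routine.
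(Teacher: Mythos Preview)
Your proposal is correct and follows essentially the same approach as the paper, which dispatches the proof in a single line by declaring it ``essentially identical to that of Lemma~\ref{lemcotste}.'' Your additional care in checking that the $b_h-b$ consistency terms $E_{11}$--$E_{14}$ survive the swap of arguments is warranted and your analysis of that point is sound; the paper simply takes this for granted.
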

\begin{proof}
It is essentially identical to that of Lemma~\ref{lemcotste}.
\end{proof}

Our final goal is to show convergence
and obtain error estimates.
With this aim, 
we will apply to our problem the theory from
\cite{BO,Osborn1975} for non-selfadjoint compact operators.

We first recall the definition of spectral projectors.
Let $\mu$ be a nonzero eigenvalue of $T$ with algebraic
multiplicity $m$ and let $\Gamma$ be an open disk in the complex plane
centered at $\mu$, such that
$\mu$ is the only eigenvalue of $T$ lying in $\Gamma$
and $\partial \Gamma\cap\sp(T)=\emptyset$. 
The spectral projectors $E$ and $E^{*}$ are defined as follows:
\begin{itemize}
\item The spectral projector of $T$ relative to $\mu$:
$E:=(2\pi i)^{-1}\int_{\partial \Gamma} (z-T)^{-1}dz$;
\item The spectral projector of $T^{*}$ relative to $\overline{\mu}$:
$E^*:=(2\pi i)^{-1}\int_{\partial \Gamma} (z-T^*)^{-1}dz.$
\end{itemize}
$E$ and $E^{*}$ are projections onto the space of generalized
eigenvectors $R(E)$ and $R(E^*)$, respectively.
It is simple to prove that $R(E),R(E^*)\in H^{2+s}(\Omega)\times H^{1+t}(\Omega)$.

Now, since $T_h\to T$ in norm, there exist $m$ eigenvalues (which lie in $\Gamma$)
$\mu_h^{(1)},\ldots,\mu_h^{(m)}$ of $T_h$
(repeated according to their respective multiplicities)
will converge to $\mu$ as $h$ goes to zero.

In a similar way, we introduce the following spectral
projector $E_h:=(2\pi i)^{-1}\int_{\partial \Gamma} (z-T_h)^{-1}dz$,
which is a projector onto the invariant subspace $R(E_h)$
of $T_h$ spanned by the generalized eigenvectors
of $T_h$ corresponding to $\mu_h^{(1)},\ldots,\mu_h^{(m)}$.

We recall the definition of the \textit{gap} $\hdel$ between two closed
subspaces $\mathcal{X}$ and $\mathcal{Y}$ of a Hilbert space $\mathcal{V}$:
$$
\hdel(\mathcal{X},\mathcal{Y})
:=\max\left\{\delta(\mathcal{X},\mathcal{Y}),\delta(\mathcal{Y},\mathcal{X})\right\},$$
where
$$
\delta(\mathcal{X},\mathcal{Y})
:=\sup_{\mathbf{x}\in\mathcal{X}:\ 
\left\|x\right\|_{\mathcal{V}}=1}\delta(x,\mathcal{Y}),
\quad\text{with }\delta(x,\mathcal{Y}):=
\inf_{y\in\mathcal{Y}}\|x-y\|_{\mathcal{V}}.$$

Let $\mathcal{P}_h:=\mathcal{P}_h^2\times \mathcal{P}_h^1:\Hcd\times\Hcu\to
Z_h\subseteq \Hcd\times\Hcu$ be the projector defined by 
\begin{equation*}
a(\mathcal{P}_h(u,\phi)-(u,\phi),(v_h,\psi_h))=a^{\Delta}(\mathcal{P}_h^2u-u,v_h)
+a^{\nabla}(\mathcal{P}_h^1\phi-\phi,\psi_h)=0\qquad \forall  (v_h,\psi_h)\in Z_h.
\end{equation*}
We note that the form $a(\cdot,\cdot)$ is the inner product of $\Hcd\times\Hcu$.
Therefore, we have
\begin{align}
|(u,\phi)-\mathcal{P}(u,\phi)|_{\Hcd\times\Hcu}=\inf\limits_{(v_h,\psi_h)\in Z_h}|(u,\phi)
-(v_h,\psi_h)|_{\Hcd\times\Hcu},\label{best_appr_of_P}
\end{align}
and
\begin{align}
|\mathcal{P}(u,\phi)|_{\Hcd\times\Hcu}\leq |(u,\phi)|_{\Hcd\times\Hcu}\qquad
\forall (u,\phi)\in \Hcd\times\Hcu.\label{acot_de_P}
\end{align}

The following error estimates for the approximation of eigenvalues and
eigenfunctions hold true.

\begin{theorem}
\label{gap}
There exists a strictly positive constant $C$ such that
\begin{align}
\hdel(R(E),R(E_h)) 
& \le C h^{\min\{s,t\}},\label{bound1}
\\
\left|\mu-\hat{\mu}_h\right|
& \le Ch^{2\min\{s,t\}},\label{bound2}
\end{align}
where
$\hat{\mu}_h:=\frac{1}{m}\sum\limits_{k=1}^{m}\mu_h^{(k)}$ and
with the constants $s$ and $t$ as in Lemmas~\ref{lem_regul} and \ref{lem_regul_adj}
(see also Remark~\ref{rem_regul}).
\end{theorem}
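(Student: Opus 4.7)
The plan is to follow the classical spectral approximation theory for compact non-self-adjoint operators of Babu\v{s}ka--Osborn and Osborn~\cite{BO,Osborn1975}, which reduces both estimates to bounds on $(T-T_h)$ restricted to the finite-dimensional invariant subspaces $R(E)$ and $R(E^*)$. A key input is the additional regularity $R(E),R(E^*)\subset H^{2+s}(\Omega)\times H^{1+t}(\Omega)$, which follows from Lemmas~\ref{lem_regul} and \ref{lem_regul_adj}, together with the norm convergence $T_h\to T$ and $T_h^*\to T^*$ already established in Lemmas~\ref{propP1} and \ref{propP1adj}.

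For the gap estimate \eqref{bound1}, I would invoke the standard abstract result (see \cite{BO}) that, once $T_h\to T$ in norm, one has $\hdel(R(E),R(E_h))\le C\,\|(T-T_h)|_{R(E)}\|$ for $h$ small enough. Thus it suffices to bound $(T-T_h)$ on $R(E)$. Given $(f,g)\in R(E)$, the regularity result yields $\tilde u\in H^{2+s}(\Omega)$, $\tilde\phi\in H^{1+t}(\Omega)$ with $\|\tilde u\|_{2+s,\Omega}+\|\tilde\phi\|_{1+t,\Omega}\le C\|(f,g)\|$. Choosing $(\tilde u_I,\tilde\phi_I)\in Z_h$ via Propositions~\ref{app0} and \ref{app2}, and $(\tilde u_\pi,\tilde\phi_\pi)$ piecewise via Proposition~\ref{app1}, and inserting these in Lemma~\ref{lemcotste} immediately gives $\|(T-T_h)(f,g)\|\le C h^{\min\{s,t\}}\|(f,g)\|$, from which \eqref{bound1} follows.

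For the double-order eigenvalue estimate \eqref{bound2}, I would apply Osborn's identity: choosing a basis $\{(f_k,g_k)\}_{k=1}^{m}$ of $R(E)$ and its dual basis $\{(f_k^*,g_k^*)\}_{k=1}^{m}$ in $R(E^*)$, one has
\begin{equation*}
\bigl|\hat\mu_h-\mu\bigr|\le \frac{1}{m}\sum_{k=1}^{m}\bigl|\langle (T-T_h)(f_k,g_k),(f_k^*,g_k^*)\rangle\bigr|
+C\,\|(T-T_h)|_{R(E)}\|\,\|(T^*-T_h^*)|_{R(E^*)}\|.
\end{equation*}
The last term is already $\mathcal{O}(h^{2\min\{s,t\}})$ by Lemmas~\ref{propP1} and \ref{propP1adj} restricted to $R(E),R(E^*)$. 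For each diagonal term I would write $(\tilde u_h,\tilde\phi_h)=T_h(f_k,g_k)$ and $(\tilde u^*,\tilde\phi^*)=T^*(f_k^*,g_k^*)$, then expand
\begin{equation*}
\langle (T-T_h)(f_k,g_k),(f_k^*,g_k^*)\rangle
= a\bigl((\tilde u-\tilde u_h,\tilde\phi-\tilde\phi_h),(\tilde u^*,\tilde\phi^*)\bigr)
\end{equation*}
using the defining relations for $T$, $T_h$, $T^*$, and insert interpolants of both the primal and dual solutions. The conforming Galerkin-orthogonality part decouples into a product of primal and dual approximation errors, each of order $h^{\min\{s,t\}}$ by the regularity lemmas together with Propositions~\ref{app0}--\ref{app2}. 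The residual pieces coming from $a_h-a$ and $b_h-b$ are handled by the same arguments used in the bound of $E_1,E_2,E_3$ inside the proof of Lemma~\ref{lemcotste}, applied to pairs involving \emph{both} a primal and a dual interpolation error so that one factor of $h^{\min\{s,t\}}$ is produced from each side.

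The main obstacle is precisely this last point: because the VEM forms $a_h$ and $b_h$ are not consistent with $a$ and $b$, the classical Galerkin orthogonality is unavailable, so I cannot directly invoke a Pythagoras-type argument to get the square. The delicate step is to reorganize the inconsistency terms $a(\cdot,\cdot)-a_h(\cdot,\cdot)$ and $b(\cdot,\cdot)-b_h(\cdot,\cdot)$, evaluated on \textbf{interpolants} rather than on eigenfunctions themselves, and to show that polynomial consistency \eqref{consis-a1}--\eqref{consis-a2} plus the $\Pi^0,\Pi^\Delta,\Pi^\nabla$ projection identities allow to extract one factor of $h^{\min\{s,t\}}$ from the primal regularity of $\tilde u,\tilde\phi$ and a second factor from the dual regularity of $\tilde u^*,\tilde\phi^*$. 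Once that accounting is done carefully, summation over $k$ gives \eqref{bound2}.
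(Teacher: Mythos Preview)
Your high-level strategy is exactly the paper's: invoke Babu\v{s}ka--Osborn for \eqref{bound1} via $\|(T-T_h)|_{R(E)}\|$ and Lemma~\ref{lemcotste}, and for \eqref{bound2} use Osborn's identity, bounding the product term by Lemmas~\ref{propP1}--\ref{propP1adj} and then working on the diagonal pairings. So the plan is correct and not a different route.

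Two concrete issues, however. First, your displayed identity
\[
\langle (T-T_h)(f_k,g_k),(f_k^*,g_k^*)\rangle
= a\bigl((\tilde u-\tilde u_h,\tilde\phi-\tilde\phi_h),(\tilde u^*,\tilde\phi^*)\bigr)
\]
with $(\tilde u^*,\tilde\phi^*)=T^*(f_k^*,g_k^*)$ is not correct: the pairing here is $a(\cdot,\cdot)$ itself, so the second slot must be $(f_k^*,g_k^*)$, not $T^*(f_k^*,g_k^*)$. The paper starts from $a((T-T_h)(u_k,\phi_k),(u_k^*,\phi_k^*))$ directly and never substitutes $T^*$ there.

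Second, and more substantively, the part you flag as the obstacle is precisely where the paper's technical content lies, and your sketch does not yet supply the needed mechanism. The paper does \emph{not} rely on interpolants of the dual eigenfunction; it introduces the $a$-orthogonal projector $\mathcal{P}_h:H_0^2\times H_0^1\to Z_h$, takes $(v_h,\psi_h)=\mathcal{P}_h(u_k^*,\phi_k^*)$, and splits the pairing into three brackets:
(i) $a((T-T_h)(u_k,\phi_k),(u_k^*,\phi_k^*)-(v_h,\psi_h))$,
(ii) $b-b_h$ applied to $((u_k,\phi_k),(v_h,\psi_h))$,
(iii) $a_h-a$ applied to $(T_h(u_k,\phi_k),(v_h,\psi_h))$.
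Bracket (i) is a clean product of two $h^{\min\{s,t\}}$ factors via \eqref{best_appr_of_P}. Bracket (ii) is the $E_{11}$--$E_{14}$ argument of Lemma~\ref{lemcotste}, upgraded using $\phi_k\in H^{1+t}$. The crucial bracket is (iii): one cannot simply ``insert interpolants on both sides''; the paper uses consistency \eqref{consis-a1}--\eqref{consis-a2} to insert the local projector $\Pi_h^K=(\Pi_2^\Delta,\Pi_1^\nabla)$ in \emph{both} arguments simultaneously, so that the resulting factor on each side is $|(\cdot)-\Pi_h^K(\cdot)|$, and then a triple triangle inequality through $T(u_k,\phi_k)$ (resp.\ $(u_k^*,\phi_k^*)$) converts each of these into an $h^{\min\{s,t\}}$ factor. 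Your proposal mentions the polynomial consistency identities but does not articulate this two-sided $\Pi_h^K$ insertion, which is the step that actually manufactures the second power of $h^{\min\{s,t\}}$ in the inconsistency term.
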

\begin{proof}
As a consequence of Lemma~\ref{propP1}, $T_h$ converges in norm to $T$
as $h$ goes to zero. Then, the proof of \eqref{bound1} follows as
a direct consequence of Theorem 7.1 from \cite{BO}
and the fact that, for $(f,g)\in R(E)$,
$\Vert(f,g)\Vert_{H^{2+s}(\Omega)\times H^{1+t}(\Omega)}\le \Vert(f,g)\Vert$,
because of Lemma~\ref{lem_regul}.

In what follows we will prove \eqref{bound2}:
assume that $T(u_k,\phi_k)=\mu(u_k,\phi_k)$, $k=1,\ldots,m$.
Since $a(\cdot,\cdot)$ is an inner product in $\Hcd\times\Hcu$,
we can choose a dual basis for $R(E^*)$ 
denoted by $(u_k^*,\phi_k^*)\in\Hcd\times\Hcu$ satisfying 
$$a((u_k,\phi_k),(u_l^*,\phi_l^*))=\delta_{k,l}.$$  
Now, from \cite[Theorem~7.2]{BO}, we have that
\begin{equation*}
|\mu-\hat{\mu}_h|\leq \frac{1}{m}\sum\limits_{k=1}^{m}|\langle(T-T_h)(u_k,\phi_k),(u_k^*,\phi_k^*)\rangle|
+C||(T-T_h)|_{R(E)}|| ||(T^*-T_h^*)|_{R(E^*)}||,
\end{equation*}
where $\langle\cdot,\cdot\rangle$ denotes the corresponding duality pairing. 

Thus, in order to obtain \eqref{bound2}, we need to bound
the two terms on the right hand side above.

The second term can be easily bounded from Lemmas~\ref{propP1} and \ref{propP1adj}.
In fact, we have 
\begin{equation}\label{est1_conv_autv}
||(T-T_h)|_{R(E)}|| ||(T^*-T_h^*)|_{R(E^*)}||\leq Ch^{2\min\{s,t\}}.
\end{equation}

Next, we manipulate the first term as follows:
adding and subtracting $(v_h,\psi_h)\in Z_h$
and using the definition of $T$ and $T_h$, we obtain,
\begin{align}
&\langle(T-T_h)(u_k,\phi_k),(u_k^*,\phi_k^*)\rangle
=a((T-T_h)(u_k,\phi_k),(u_k^*,\phi_k^*))\nonumber\\
&=a((T-T_h)(u_k,\phi_k),(u_k^*,\phi_k^*)-(v_h,\psi_h))+a(T(u_k,\phi_k),(v_h,\psi_h))
-a(T_h(u_k,\phi_k),(v_h,\psi_h))\nonumber\\
&=a((T-T_h)(u_k,\phi_k),(u_k^*,\phi_k^*)-(v_h,\psi_h))+b((u_k,\phi_k),(v_h,\psi_h))
-a(T_h(u_k,\phi_k),(v_h,\psi_h))\nonumber\\
&+a_h(T_h(u_k,\phi_k),(v_h,\psi_h)) -b_h((u_k,\phi_k),(v_h,\psi_h))\nonumber\\
&=\Big\{a((T-T_h)(u_k,\phi_k),(u_k^*,\phi_k^*)-(v_h,\psi_h))\Big\}
+\Big\{b((u_k,\phi_k),(v_h,\psi_h))-b_h((u_k,\phi_k),(v_h,\psi_h))\Big\}\nonumber\\
&+\Big\{a_h(T_h(u_k,\phi_k),(v_h,\psi_h))- a(T_h(u_k,\phi_k),(v_h,\psi_h))\Big\}
\qquad \forall (v_h,\psi_h)\in Z_h.\label{exp_Rod}
\end{align}  
Now, we estimate each bracket in \eqref{exp_Rod} separately.
First, to bound the second bracket, we use the additional regularity of
$(u_k,\phi_k)\in R(E)\subset H^{2+s}(\O)\times H^{1+t}(\O)$
and repeating the same steps used to derive \eqref{exp-E_1}
(in this case with $(u_k,\phi_k)$ instead of $(f,g)$), we have
\begin{align}
& b_{h,K}((u_k,\phi_k),(v_h,\psi_h))-b_{\E}((u_k,\phi_k),(v_h,\psi_h))
=E_{11}+E_{12}+E_{13}+E_{14}.\nonumber
\end{align}

Now, we will bound each term $E_{1i} \ i=1,2,3,4$,
as in the proof of Lemma~\ref{lemcotste},
but in this case exploiting the additional regularity
and the estimates in Lemmas~\ref{lem_regul} and \ref{lem_regul_adj}
for $(u_k,\phi_k)\in R(E)$ and $(u_k^{*},\phi_k^{*})\in R(E^{*})$,
respectively.

In particular, the terms $E_{11},E_{12}$ and $E_{14}$
can be bound exactly as in the proof of Lemma~\ref{lemcotste}.
However, for the term $E_{13}$, we proceed as follows:

\begin{align*}
E_{13}&=\int_K \nabla \Pi_1^{\nabla}\phi_k \cdot \nabla \overline{v_h} - \int_K \nabla \phi_k
\cdot \nabla \overline{v_h}=\int_K \nabla(\Pi_1^{\nabla}\phi_k -\phi_k )\cdot \nabla \overline{v_h}\\[1ex]
&=\int_K\nabla(\Pi_1^{\nabla}\phi_k -\phi_k )\cdot \nabla (\overline{v_h}
-\tilde{v}_{h}^{\pi})\leq |\Pi_1^{\nabla}\phi_k -\phi_k  |_{1,K}|v_h -\tilde{v}_{h}^{\pi} |_{1,K}\\[1ex]
&=\inf_{q_h\in\mathbb{P}_1(K)}|\phi_k-q_h|_{1,K}|v_h -\tilde{v}_{h}^{\pi} |_{1,K}
\leq Ch_K^{1+t}|\phi_k |_{1+t,K}|v_h |_{2,K}\\
&\leq Ch_K^{2\min\{s,t\}}|\phi_k |_{1+t,K}|v_h |_{2,K},
\end{align*}
where we have used the definition of $\Pi_1^{\nabla}$
with $\tilde{v}_{h}^{\pi}\in\mathbb{P}_1(K)$ such that Proposition~\ref{app1}
holds true and the fact that $\phi_k\in H^{1+t}(\O)$ together with Proposition~\ref{app1} again.

Therefore taking sum and using the additional regularity for $\phi_k$,
together with Lemma~\ref{lem_regul}, we obtain
\begin{align}
&\Big\{b((u_k,\phi_k),(v_h,\psi_h))-b_h((u_k,\phi_k),(v_h,\psi_h))\Big\}
\leq Ch^{2\min\{s,t\}}||(u_k,\phi_k )||||(v_h,\psi_h) ||
\qquad \forall (v_h,\psi_h) \in Z_h.\label{eq_Est_Rod0}
\end{align}

Now, we estimate the third bracket in \eqref{exp_Rod}.
Let $(w_h,\xi_h):=T_h(u_k,\phi_k)$ and
$\Pi_h^K$ be defined by $(\Pi_h^K(v,\psi))|_K:=(\Pi_2^{\Delta}v,\Pi_1^{\nabla}\psi)$
for all $K\in\CT_h$ and for all $(v,\psi)\in\Hcd\times\Hcu$, where $\Pi_2^{\Delta}$ and $\Pi_1^{\nabla}$
have been defined in \eqref{numero}-\eqref{numeroo} and \eqref{numero1}-\eqref{numeroo1}, respectively.
Hence, we have
\begin{align}
&a_h((w_h,\xi_h),(v_h,\psi_h))-a((w_h,\xi_h),(v_h,\psi_h))=\sumkth
\Big\{ a_{h,K}((w_h,\xi_h),(v_h,\psi_h))-a_\E((w_h,\xi_h),(v_h,\psi_h)) \Big\}\nonumber\\
&=\sumkth \Big\{ a_{h,K}((w_h,\xi_h)-(\Pi_2^{\Delta}w_h,\Pi_1^{\nabla}\xi_h),(v_h,\psi_h))
+a_K((\Pi_2^{\Delta}w_h,\Pi_1^{\nabla}\xi_h)-(w_h,\xi_h),(v_h,\psi_h))\Big\}\nonumber\\
&=\sumkth \Big\{ a_{h,K}((w_h,\xi_h)-(\Pi_2^{\Delta}w_h,\Pi_1^{\nabla}\xi_h),(v_h,\psi_h)
-(\Pi_2^{\Delta}v_h,\Pi_1^{\nabla}\psi_h))\nonumber\\
&+a_K((\Pi_2^{\Delta}w_h,\Pi_1^{\nabla}\xi_h)-(w_h,\xi_h),(v_h,\psi_h)-(\Pi_2^{\Delta}v_h,\Pi_1^{\nabla}\psi_h)) \Big\}\nonumber\\
&\leq C\sumkth \Big\{|(w_h,\xi_h)-(\Pi_2^{\Delta}w_h,\Pi_1^{\nabla}\xi_h)|_{H^2(K)\times H^{1}(K)}
|(v_h,\psi_h)-(\Pi_2^{\Delta}v_h,\Pi_1^{\nabla}\psi_h)|_{H^2(K)\times H^{1}(K)}\Big\}\nonumber\\
&= C\sumkth \Big\{|T_h(u_k,\phi_k)-\Pi_h^K T_h(u_k,\phi_k)|_{H^2(K)\times H^{1}(K)}
|(v_h,\psi_h)-\Pi_h^K(v_h,\psi_h)|_{H^2(K)\times H^{1}(K)}\Big\},\label{eq_Est_Rod1}
\end{align}
for all $(v_h,\psi_h)\in Z_h$, where we have used \eqref{consis-a1}-\eqref{consis-a2},
Cauchy-Schwarz inequality and \eqref{stab-a1}-\eqref{stab-a2}. Now, using
the triangular inequality, we have that
\begin{align}
|T_h(u_k,\phi_k)-\Pi_h^K T_h(u_k,\phi_k)|_{H^{2}(K)\times H^{1}(K)}&\leq 
|T_h(u_k,\phi_k)-T(u_k,\phi_k)|_{H^{2}(K)\times H^{1}(K)}\nonumber\\
&+|\Pi_h^K T_h(u_k,\phi_k)-\Pi_h^K T(u_k,\phi_k)|_{H^{2}(K)\times H^{1}(K)}\nonumber\\
&+|\Pi_h^K T(u_k,\phi_k)-T(u_k,\phi_k)|_{H^{2}(K)\times H^{1}(K)}.\nonumber
\end{align}
Thus, from \eqref{eq_Est_Rod1}, the above estimate,
the stability of $\Pi_h^K$ and the additional regularity for $(u_k,\phi_k)$ together
with Lemma~\ref{lem_regul}, we have
\begin{align}
&a_h(T_h(u_k,\phi_k),(v_h,\psi_h))- a(T_h(u_k,\phi_k),(v_h,\psi_h))\nonumber\\
&\leq Ch^{\min\{s,t \}}||(u_k,\phi_k )||\sumkth
|(v_h,\psi_h)-\Pi_h^K(v_h,\psi_h)|_{H^2(K)\times H^{1}(K)}\qquad \forall (v_h,\psi_h)\in Z_h. \label{eq_Est_Rod2}
\end{align}

Finally, we take $(v_h,\psi_h):=\mathcal{P}(u_k^*,\phi_k^*)\in Z_h$ in \eqref{exp_Rod}.
Thus, on the one hand, we bound the first bracket in \eqref{exp_Rod} as follows,
\begin{align}
&a((T-T_h)(u_k,\phi_k),(u_k^*,\phi_k^*)-(v_h,\psi_h))=a((T-T_h)(u_k,\phi_k),(u_k^*,\phi_k^*)
-\mathcal{P}(u_k^*,\phi_k^*))\nonumber\\
&\leq |(T-T_h)(u_k,\phi_k)|_{\Hcd\times\Hcu}|(u_k^*,\phi_k^*)-\mathcal{P}(u_k^*,\phi_k^*)|_{\Hcd\times\Hcu}\nonumber\\
&= |(T-T_h)(u_k,\phi_k)|_{\Hcd\times\Hcu}\inf\limits_{(r_h,s_h)\in Z_h}|(u_k^*,\phi_k^*)
-(r_h,s_h)|_{\Hcd\times\Hcu}\nonumber\\
&\leq  |(T-T_h)(u_k,\phi_k)|_{\Hcd\times\Hcu}|(u_k^*,\phi_k^*)-((u_k^*)_I,(\phi_k^*)_I)|_{\Hcd\times\Hcu}\nonumber\\
&\leq Ch^{2{\min\{s,t \}}}||(u_k^*,\phi_k^*)||,\nonumber
\end{align}
where we have used \eqref{best_appr_of_P}, Propositions~\ref{app0} and \ref{app2},
the additional regularity for $(u_k^*,\phi_k^*)$, Lemma~\ref{lem_regul_adj}
and Lemma~\ref{propP1}.

On the other hand, from \eqref{eq_Est_Rod2} we have that
\begin{align}
|(v_h,\psi_h)-\Pi_h^K(v_h,\psi_h)|&_{H^2(K)\times H^{1}(K)}=|\mathcal{P}(u_k^*,\phi_k^*)-\Pi_h^K\mathcal{P}(u_k^*,\phi_k^*)|_{H^2(K)\times H^{1}(K)}\nonumber\\
&\leq |\mathcal{P}(u_k^*,\phi_k^*)-(u_k^*,\phi_k^*) |_{H^2(K)\times H^{1}(K)}+|(u_k^*,\phi_k^*)-\Pi_h^K(u_k^*,\phi_k^*) |_{H^2(K)\times H^{1}(K)}\nonumber\\
&+|\Pi_h^K((u_k^*,\phi_k^*)-\mathcal{P}(u_k^*,\phi_k^*))|_{H^2(K)\times H^{1}(K)}.\nonumber
\end{align}
Then, using again \eqref{best_appr_of_P}, Propositions~\ref{app0} and \ref{app2},
the additional regularity for $(u_k^*,\phi_k^*)$, Lemma~\ref{lem_regul_adj}
and Lemma~\ref{propP1}, we obtain from \eqref{eq_Est_Rod2} that
\begin{align}
&a_h(T_h(u_k,\phi_k),(v_h,\psi_h))- a(T_h(u_k,\phi_k),(v_h,\psi_h))\leq
Ch^{2{\min\{s,t \}}}||(u_k,\phi_k )||||(u_k^*,\phi_k^*)||. \label{eq_Est_Rod3}
\end{align}
Thus, from \eqref{exp_Rod}, \eqref{eq_Est_Rod0} and \eqref{eq_Est_Rod3},
we obtain  
\begin{align}
|\langle (T-T_h)(u_k,\phi_k),(u_k^*,\phi_k^*) \rangle|\leq C h^{2\min\{s,t \}} \label{preend}.
\end{align}
Therefore, the proof follows from estimates \eqref{est1_conv_autv} and \eqref{preend}.
\end{proof}

\begin{remark}
The error estimate for the eigenvalue $\mu$ of $T$ yield
analogous estimate for the approximation
of the eigenvalue $\lambda = 1/\mu$ of Problem~\ref{P1}
by means of $\hat{\lambda}_h:=\frac{1}{m}\sum\limits_{k=1}^{m}\lambda_h^{(k)}$,
where $\lambda_h^{(k)}=1/\mu_h^{(k)}$.
\end{remark}

\setcounter{equation}{0}
 \section{Numerical results}\label{SEC:NumRes}

In this section we present a series of numerical experiments
to solve the transmission eigenvalue problem with the Virtual
Element scheme \eqref{probspecdiscr}. However, to complete the
choice of the VEM, we had to fix the forms $s_{K}^{\Delta}(\cdot,\cdot)$
and $s_{K}^{\nabla}(\cdot,\cdot)$ satisfying \eqref{term-stab-SK}
and \eqref{term-stab-SK0}, respectively.
For $s_{K}^{\Delta}(\cdot,\cdot)$, we consider the same definition as in 
\cite{MRV}:
\begin{align*}
s_\E^{\Delta}(u_h,v_h):=\sigma_{\E}\sum\limits_{i=1}^{N_\E}[u_h(P_i)v_h(P_i)
+h_{P_i}^2\nabla u_h(P_i)\cdot\nabla v_h(P_i)] & \quad \forall u_h,v_h\in W_h^{K},
\end{align*}
where $P_1,\ldots,P_{N_{\E}}$ are the vertices of $\E$,
$h_{P_i}$ corresponds to the maximum diameter of the elements with $P_i$ as a vertex
and $\sigma_\E>0$ is a multiplicative factor to take into account the magnitude
of the parameter and the $h$-scaling, for instance,
in the numerical tests we have picked $\sigma_\E>0$ as the mean value of
the eigenvalues of the local matrix $a_{\E}^{\Delta}\big(\Pi_2^{\Delta} u_h,\Pi_2^{\Delta} v_h\big)$.
This ensures that the stabilizing term scales as $a_{\E}^{\Delta}(v_h,v_h)$.
Now, a choice for $s_{K}^{\nabla}(\cdot,\cdot)$ is given by
\begin{align*}
s_\E^{\nabla}(\phi_h,\psi_h):=\sum\limits_{i=1}^{N_\E}\phi_h(P_i)\psi_h(P_i)
& \quad \forall \phi_h,\psi_h\in V_h^{K},
\end{align*}
which corresponds to the identity matrix of dimension $N_\E$.
A proof of \eqref{term-stab-SK}
and \eqref{term-stab-SK0} for the above choices could be derived
following the arguments in \cite{BLR2017}.
Finally, we mention that the previous definitions
are in accordance with the analysis presented in \cite{MRR2015,MRV}
in order to avoid spectral pollution.

We have implemented in a MATLAB code the proposed VEM on arbitrary polygonal
meshes, by following the ideas presented in \cite{BBMR2014}.
Moreover, we compare our results with those existing
in the literature, for example
\cite{CakMonkSun2014,ChenGZZ2016,ColtonMonkSun2010,HYBi2017}.
We have considered three different domains, namely:
square domain, a circular domain centered at the origin
and an L-shaped domain.

\subsection{Test 1: Square domain}

In this test, we have taken $\O:=(0,1)^2$
and index of refraction $n=4$ and $n=16$.
We have tested the method by using
different families of meshes
(see Figure~\ref{FIG:VM1}):
\begin{itemize}
\item $\CT_h^1$: triangular meshes;
\item $\CT_h^2$: rectangular meshes;
\item $\CT_h^3$: hexagonal meshes;
\item $\CT_h^4$: non-structured hexagonal meshes made of convex hexagons.
\end{itemize}

The refinement parameter $N$ used to label each mesh is the number of elements
on each edge of the domain.

\begin{figure}[t]
\begin{center}
\begin{minipage}{6.3cm}
\centering\includegraphics[height=6.3cm, width=6.3cm]{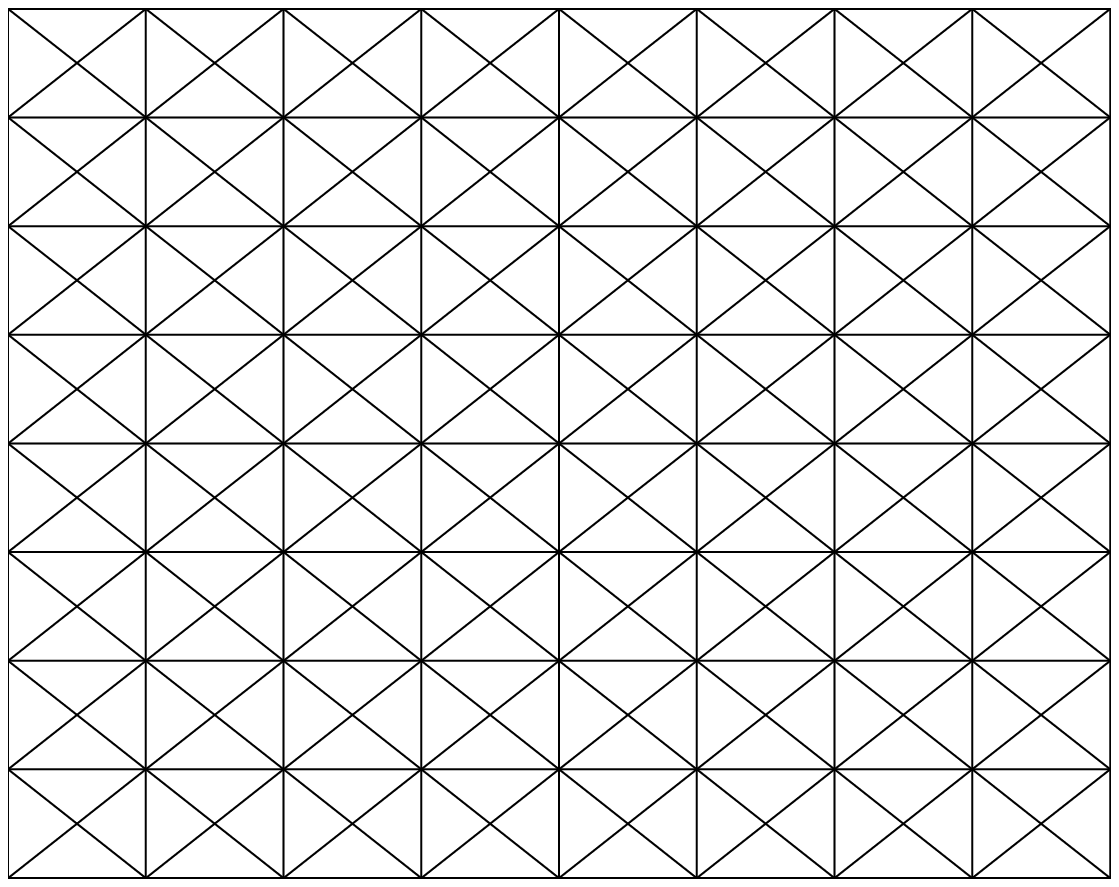}
\end{minipage}
\begin{minipage}{6.3cm}
\centering\includegraphics[height=6.3cm, width=6.3cm]{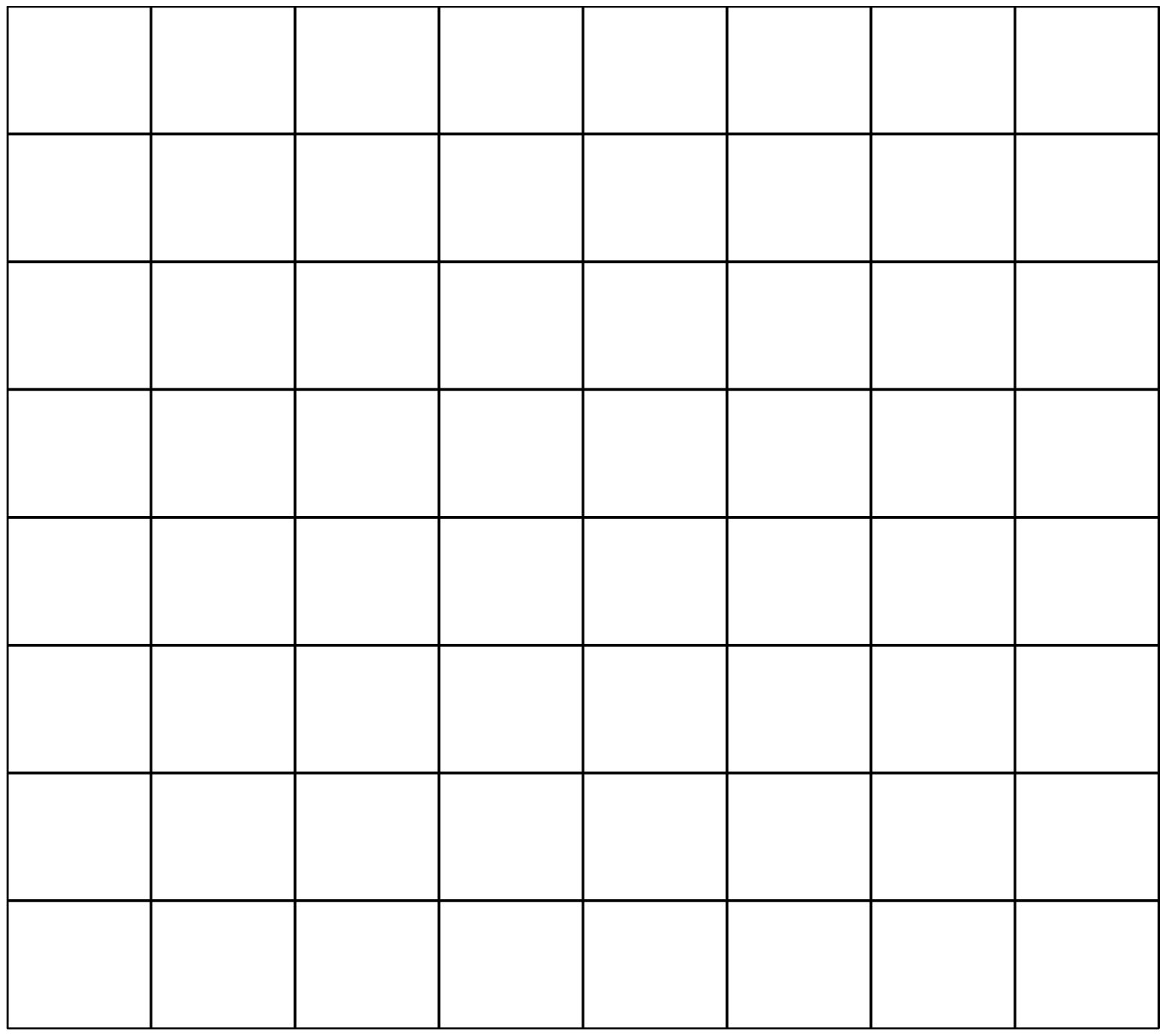}
\end{minipage}
\begin{minipage}{6.3cm}
\centering\includegraphics[height=6.3cm, width=6.3cm]{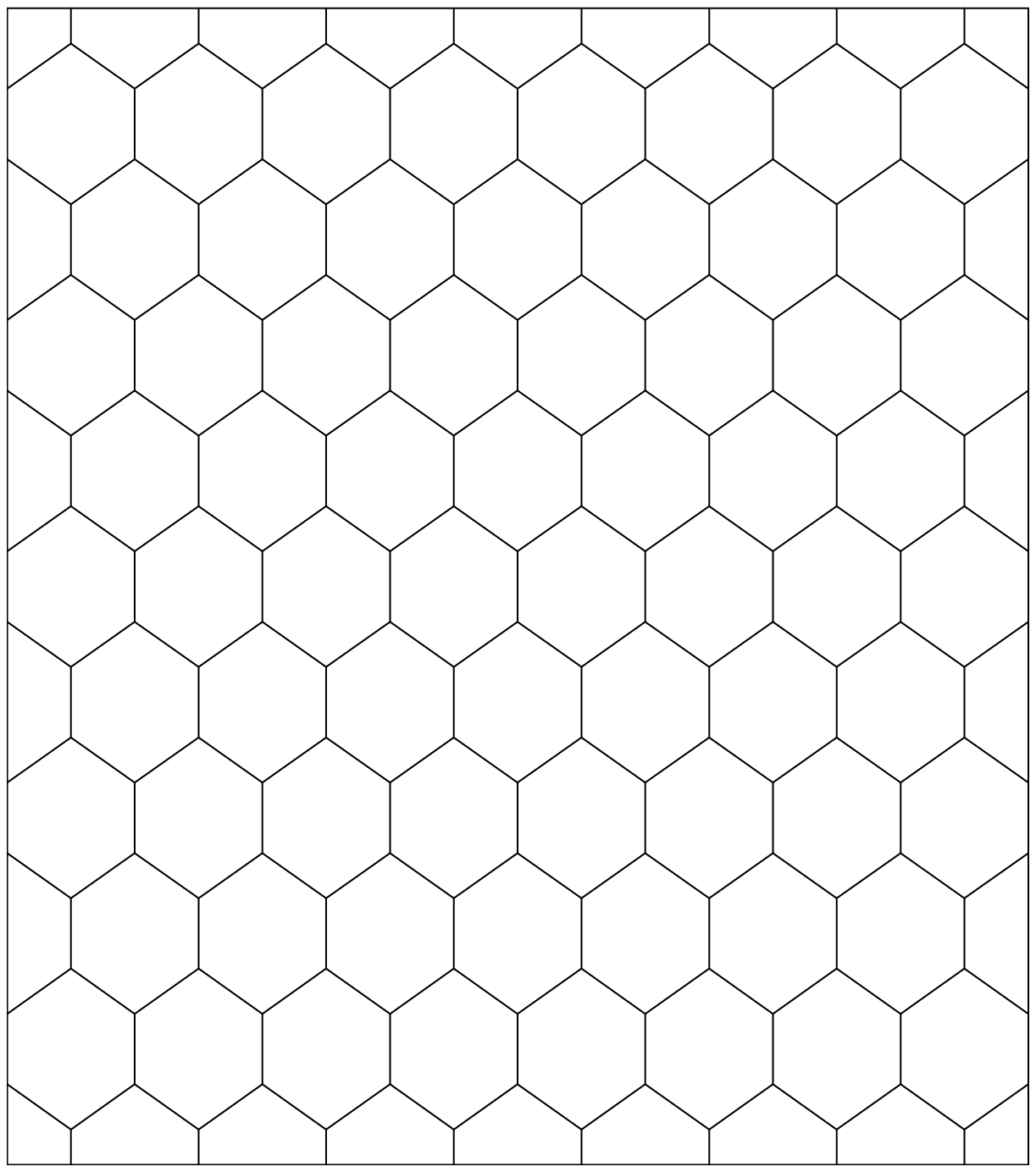}
\end{minipage}
\begin{minipage}{6.3cm}
\centering\includegraphics[height=6.3cm, width=6.3cm]{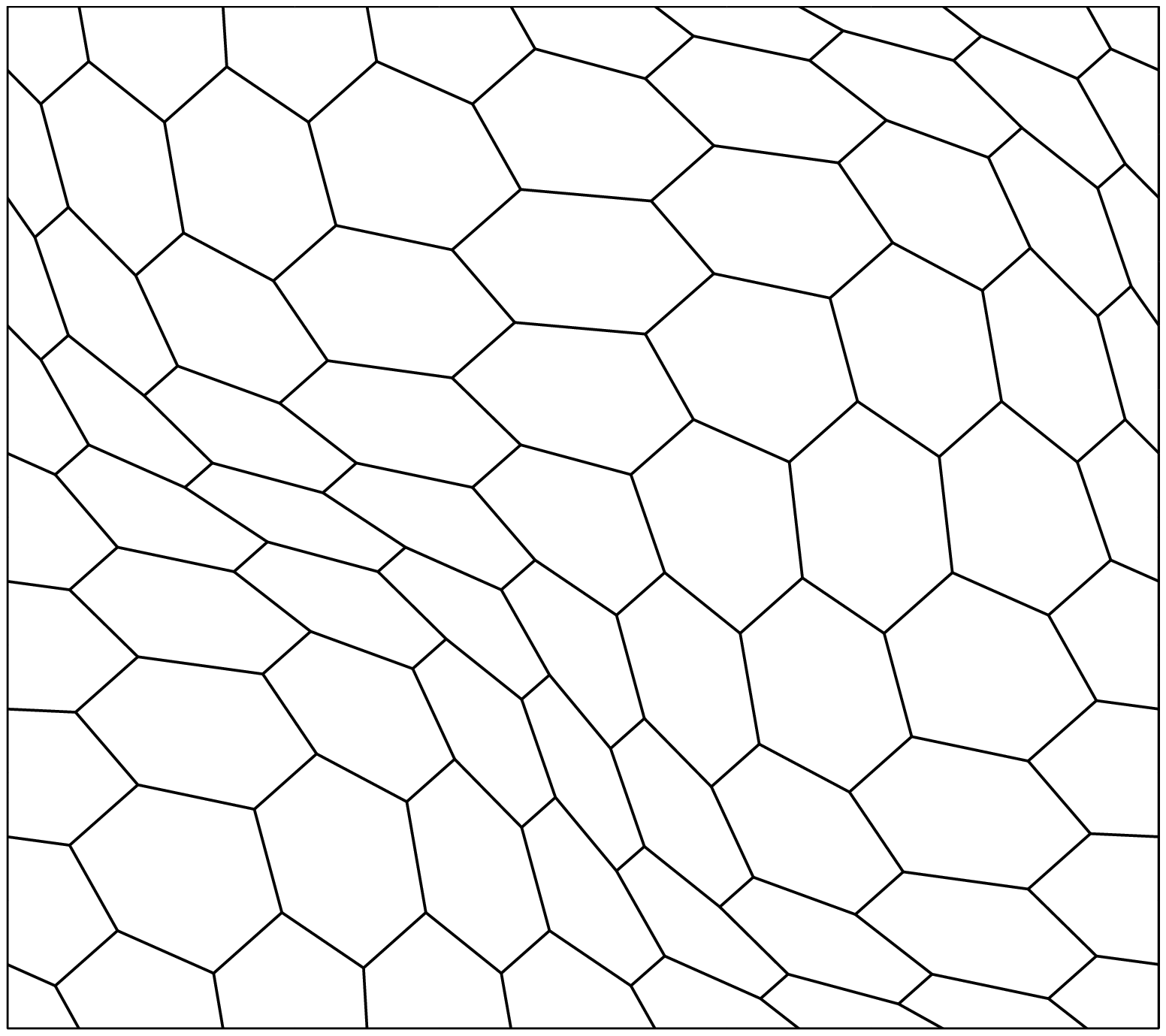}
\end{minipage}
\caption{ Sample meshes: $\CT_h^1$ (top left), $\CT_h^2$ (top right),
$\CT_h^3$ (bottom left) and $\CT_h^{4}$ (bottom right), for $N=8$.}
 \label{FIG:VM1}
\end{center}
\end{figure}

We report in Tables~\ref{TAB:Sq01n=16} and \ref{TAB:Sq01n=4}
the lowest transmission eigenvalues $k_{ih}$, $i=1,2,3,4$
computed by our method with two different families of meshes and $N=32,64,128,$
and with the index of refraction $n=16$ and $n=4$, respectively.
The tables include computed orders of convergence, as well as more accurate
values extrapolated by means of a least-squares fitting. 
Moreover, we compare the performance of the proposed method with those presented
in \cite{ColtonMonkSun2010,HYBi2017}. With this aim, we include
in the last row of Tables~\ref{TAB:Sq01n=16} and \ref{TAB:Sq01n=4}
the results reported in that references, for the same problem.

\begin{table}[ht]
\caption{Test 1: Lowest transmission eigenvalues $k_{ih}$, $i=1,2,3,4$ computed on different meshes
and with index of refraction $n=16$.}
\label{TAB:Sq01n=16}
\begin{center}
{\small\begin{tabular}{|c|lcccc|c|}
\hline
Meshes&$k_{ih}$  	&$k_{1h}$&$k_{2h}$&$  k_{3h}$&$k_{4h}$   \\\hline
&$N=32$   			                          & 1.8805 &  2.4467&    2.4467&    2.8691\\
&$N=64$   		                              & 1.8798 &  2.4449&    2.4449&    2.8671\\
$\CT_h^1$&$N=128$                      & 1.8796 &  2.4444&    2.4444&    2.8666\\
&Order                                       & 2.01   &  2.00  &    2.00  &    2.01 \\
&Extrapolated                                & 1.8796 &  2.4442&    2.4442&    2.8664\\\hline
&$N=32$   			                          & 1.8764 & 2.4318 &   2.4318&    2.8645\\
&$N=64$   		                              & 1.8788 & 2.4410 &   2.4410&    2.8658 \\
$\CT_h^2$&$N=128$                      & 1.8794 & 2.4434 &   2.4434&    2.8663\\
&Order                                       & 1.95   & 1.95   &   1.95  &    1.61 \\
&Extrapolated                                & 1.8796 & 2.4443 &   2.4443&    2.8665
\\\hline\rowcolor{Gray}
&\cite[Argyris method]{ColtonMonkSun2010}    &1.8651  &2.4255  &    2.4271&    2.8178\\\rowcolor{Gray}
&\cite[Continuous method]{ColtonMonkSun2010} &1.9094  &2.5032  &    2.5032&    2.9679\\\rowcolor{Gray}
&\cite[Mixed method]{ColtonMonkSun2010}      &1.8954  &2.4644  &    2.4658&    2.8918\\\rowcolor{Gray}
&\cite{HYBi2017}                             &1.8796  &2.4442  &    2.4442&    2.8664\\\hline 
\end{tabular}}
\end{center}
\end{table}

\begin{table}[ht]
\caption{Test 1: Lowest transmission eigenvalues $k_{ih}$, $i=1,2,3,4$  computed on different meshes
and with index of refraction $n=4$.}
\label{TAB:Sq01n=4}
\begin{center}
{\small\begin{tabular}{|c|lcccc|c|}
\hline 
Meshes&$k_{ih}$  &$k_{1h}$&$k_{2h}$&$  k_{3h}$&$k_{4h}$  \\\hline
&$N=32$ & 4.2835-1.1367&4.2835+1.1367&    5.3373&    5.4172\\
&$N=64$ & 4.2745-1.1446&4.2745+1.1446&    5.4375&    5.4599\\
$\CT_h^3$&$N=128$  & 4.2724-1.1467&4.2724+1.1467&    5.4661&    5.4719\\
& Order  & 2.10\& 1.89  &2.10\& 1.89 &     1.81  &    1.84  \\
&Extrapolated  & 4.2717-1.1475&4.2717+1.1475&    5.4775&    5.4765\\\hline
&$N=32$ 	 & 4.2870-1.1341&4.2870+1.1341&    5.3245&     5.4178\\
&$N=64$   & 4.2753-1.1438&4.2753+1.1438&    5.4329&     5.4602\\
$\CT_h^4$&$N=128$  & 4.2726-1.1465&4.2726+1.1465&    5.4647&     5.4719\\
& Order   & 2.12 \&1.86  &2.12\&1.86   &    1.77  &     1.85   \\
&Extrapolated  & 4.2718-1.1475&4.2718+1.1475&    5.4779&     5.4765
\\\hline\rowcolor{Gray}
&     \cite{HYBi2017}     & 4.2717-1.1474i& 4.2717+1.1474i& 5.4761  & 5.4761\\\hline
\end{tabular}}
\end{center}
\end{table}

It can be seen from Tables~\ref{TAB:Sq01n=16} and \ref{TAB:Sq01n=4}
that the eigenvalue approximation order of
our method is quadratic (as predicted by the theory for convex domains)
and that the results obtained
by the two methods agree perfectly well.

\subsection{Test 2: Circular domain}

In this test, we have taken as domain the circle
$\O:=\{(x,y)\in \R^2: x^2+y^2<1/2\}$.
We have used polygonal meshes created with PolyMesher \cite{malla}
(see Figure~\ref{FIG:CirclePolymesherLap}).
The refinement parameter $N$ is the number of elements
intersecting the boundary.

We report in Table~\ref{TAB:3transp} the five lowest transmission eigenvalues
computed with the virtual element method analyzed in this paper. The
table includes orders of convergence, as well as accurate values
extrapolated by means of a least-squares fitting.
Once again, the last rows show the values obtained
by extrapolating those computed with different methods
presented in \cite{CakMonkSun2014,ChenGZZ2016,ColtonMonkSun2010}.

\begin{table}[ht]
\caption{Test 2: Computed lowest transmission eigenvalues $k_{ih}$, $i=1,2,3,4,5$ with index of refraction $n=16$.}
\label{TAB:3transp}
\begin{center}
{\small\begin{tabular}{|l|ccccc|}
\hline
   & $k_{1h}$ & $k_{2h}$& $k_{3h}$& $k_{4h}$& $k_{5h}$ \\\hline
$N=32$   			                                & 1.9835  & 2.6032  & 2.6037  & 3.2115  & 3.2117\\
$N=64$   		                         & 1.9869  & 2.6105  & 2.6106  & 3.2225  & 3.2227\\
$N=128$                                       & 1.9877  & 2.6123  & 2.6123  & 3.2255  & 3.2256\\
 Order                                              & 1.98    & 1.97    & 2.01    & 1.86    & 1.90 \\
 Extrapolated                                      & 1.9880  & 2.6129  & 2.6129  & 3.2267  & 3.2267\\\hline\rowcolor{Gray}
\cite{CakMonkSun2014}                         & 1.9881  & -       & -       & -       &  -        \\\rowcolor{Gray}
\cite{ChenGZZ2016}                            & 1.9879  & 2.6124  & 2.6124  & 3.2255  & 3.2255  \\\rowcolor{Gray}
\cite[Argyris method]{ColtonMonkSun2010}      & 2.0076  & 2.6382  & 2.6396  & 3.2580  & 3.2598   \\\rowcolor{Gray}
\cite[Continuous method]{ColtonMonkSun2010}    & 2.0301  & 2.6937  & 2.6974  & 3.3744  & 3.3777  \\\rowcolor{Gray}
\cite[Mixed method]{ColtonMonkSun2010}         & 1.9912  & 2.6218  & 2.6234  & 3.2308  & 3.2397   \\\hline
\end{tabular}}
\end{center}
\end{table}

Once more, a quadratic order of convergence can be clearly appreciated from
Table~\ref{TAB:3transp}.

We show in Figure~\ref{FIG:CirclePolymesherLap}
the eigenfunctions corresponding to the four lowest transmission eigenvalues.

\begin{figure}[ht]
\begin{center}
\begin{minipage}{6.3cm}
\centering\includegraphics[height=6cm, width=6.9cm]{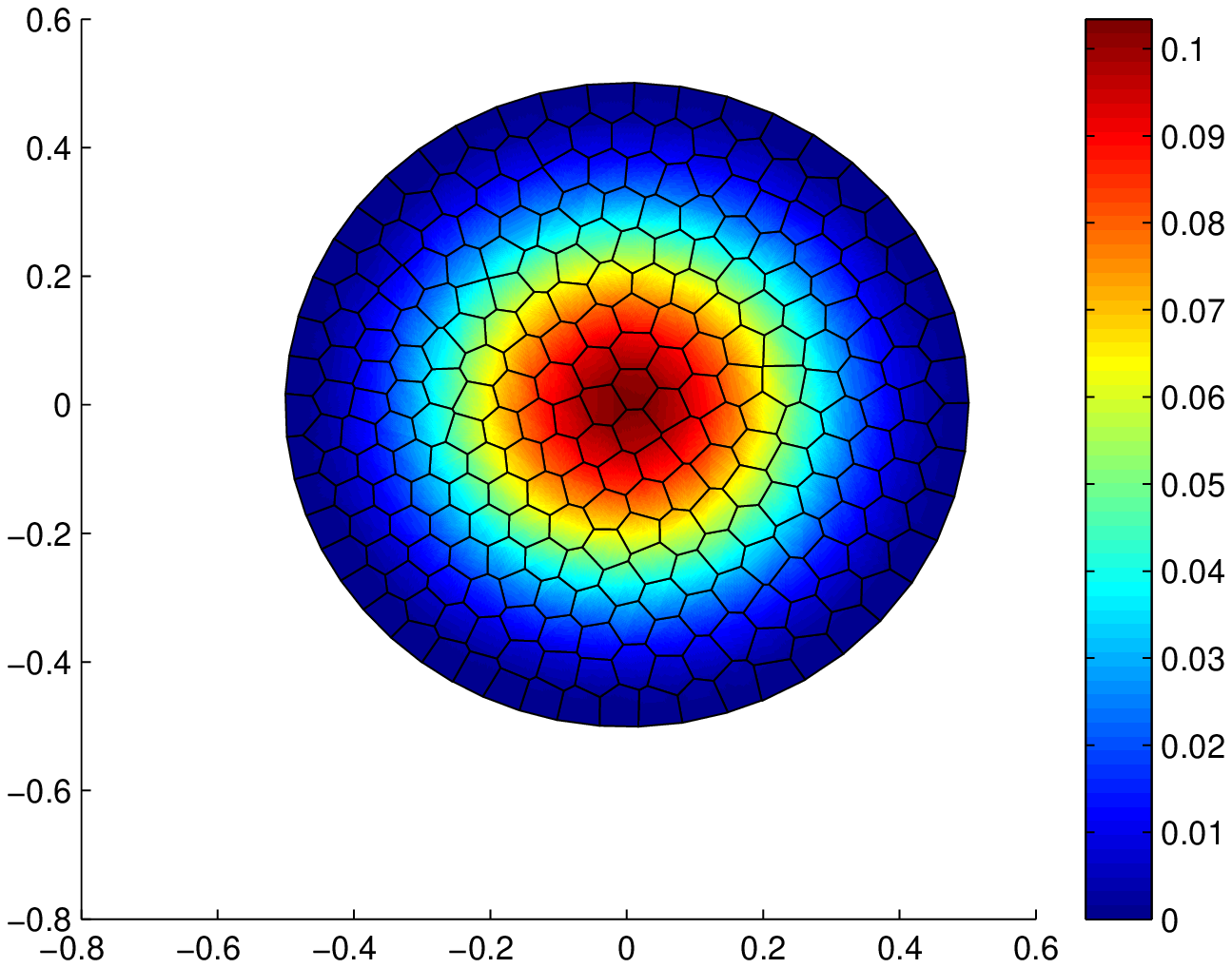}
\end{minipage}
\begin{minipage}{6.3cm}
\centering\includegraphics[height=6cm, width=6.9cm]{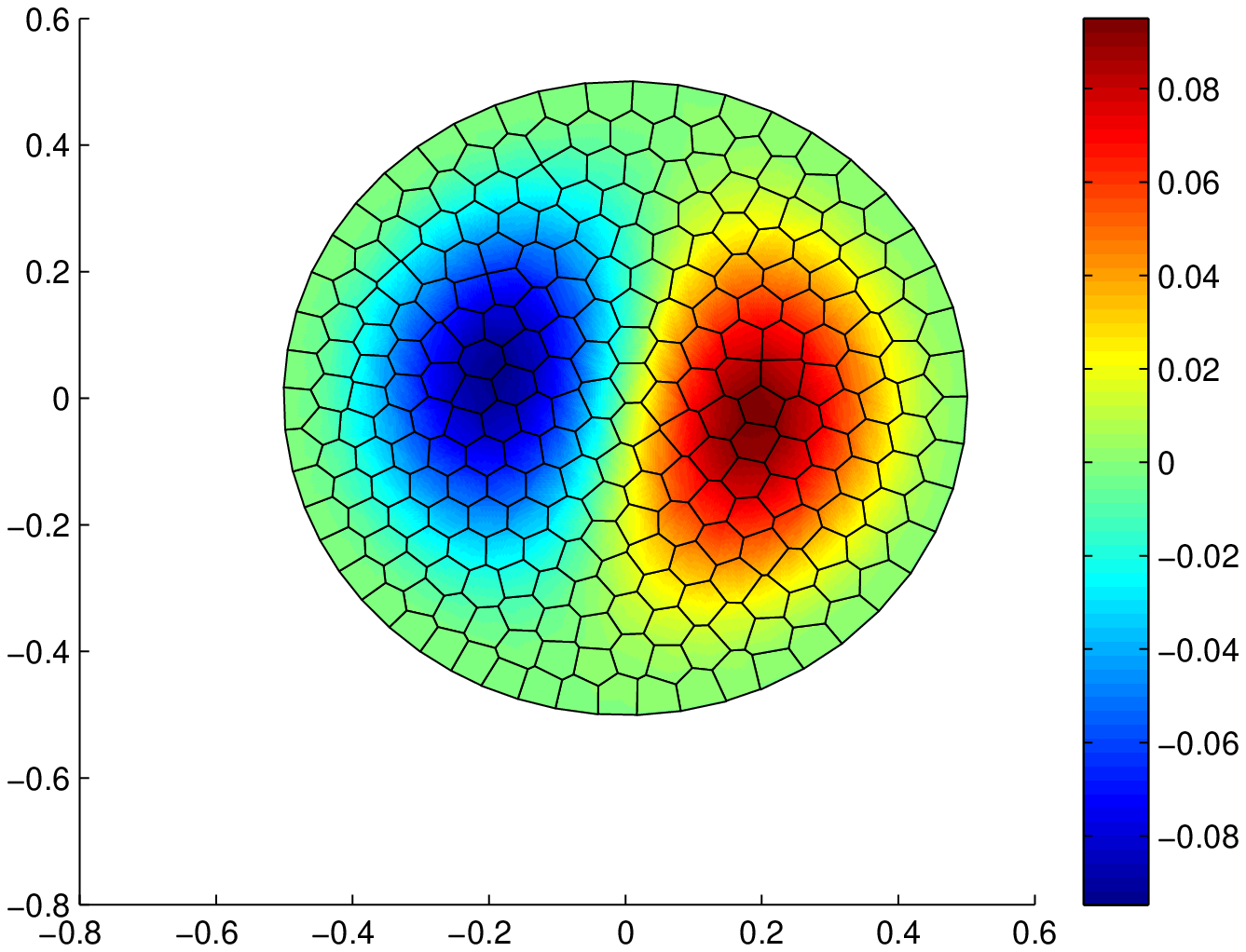}
\end{minipage}
\begin{minipage}{6.3cm}
\centering\includegraphics[height=6cm, width=6.9cm]{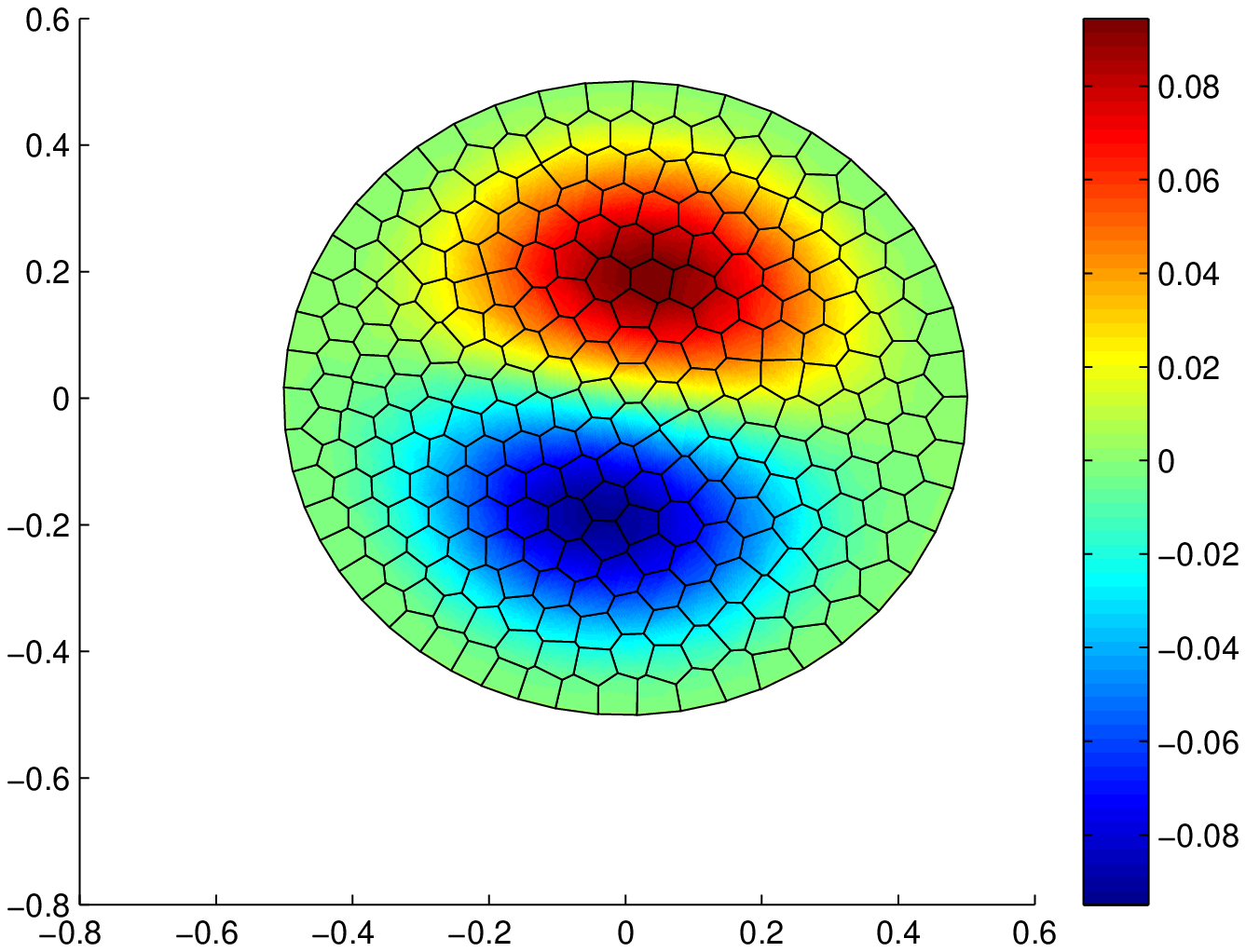}
\end{minipage}
\begin{minipage}{6.3cm}
\centering\includegraphics[height=6cm, width=6.9cm]{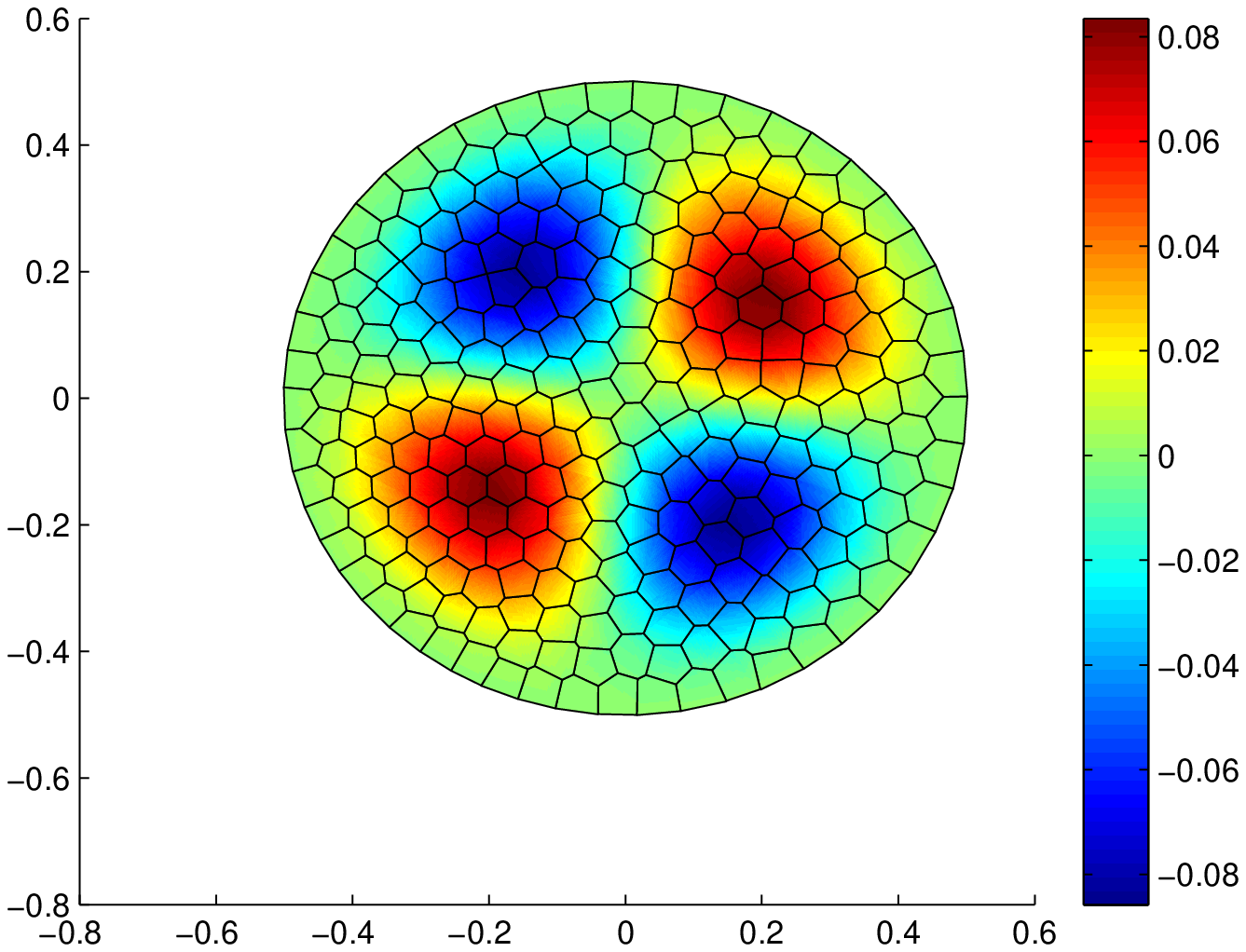}
\end{minipage}
\caption{Test 2. Eigenfunctions $u_{1h}$ (top left),
$u_{2h}$ (top right), $u_{3h}$ (bottom left)
and  $u_{4h}$ (bottom right).}
\label{FIG:CirclePolymesherLap}
\end{center}
\end{figure}	

\subsection{Test 3: L-shaped domain}

Finally, we have considered an L-shaped domain:
$\O:=(-1/2,1/2)^2\backslash([0,1/2]\times[-1/2,0])$.
We have used uniform triangular meshes as those
shown in Figure~\ref{FIG:Square01_n=16_Lshape_unif}.
The meaning of the refinement parameter $N$ is the
number of elements on each edge.

We report in Table~\ref{TAB:LShape05} the four lowest transmission eigenvalues
computed with the virtual scheme analyzed in this paper.
The table includes orders of convergence, as well as accurate
values extrapolated by means of a least-squares fitting.
Once again, we compare the performance of the proposed virtual scheme
with the one presented in \cite{CakMonkSun2014} for the same problem,
using triangular meshes.

\begin{table}[ht]
\caption{Test 3: Computed lowest transmission eigenvalues $k_{ih}$, $i=1,2,3,4$
with index of refraction $n=16$.}
\label{TAB:LShape05}
\begin{center}
{\small\begin{tabular}{|c|lccc|}
\hline
$k_{ih}$   &$k_{1h}$   &$k_{2h}$ &$k_{3h}$&$k_{4h}$ \\\hline
$N=32$     & 2.9690    &3.1480   & 3.4216 &   3.5744\\
$N=64$     & 2.9590    &3.1417   & 3.4136 &   3.5683\\
$N=128$&2.9551 & 3.1400 &   3.4113&    3.5667 \\
 Order      & 1.37     & 1.94    & 1.76  &   2.00  \\
Extrapolated & 2.9527  &  3.1395 & 3.4103 &   3.5662\\\hline\rowcolor{Gray}
\cite{CakMonkSun2014} & 2.9553   & -       &  -     &  -   \\\hline 
\end{tabular}}
\end{center}
\end{table}

We notice that for the first transmission eigenvalue, the method converges with order
close to $\min\{1.089,1.333\}$, which corresponds to the Sobolev regularity
of the domain for the biharmonic equation and Laplace equation
and with homogeneous Dirichlet boundary conditions, respectively (see \cite{G}).
Moreover, the method converges with larger orders for the rest 
of the transmission eigenvalues.

Finally, Figure~\ref{FIG:Square01_n=16_Lshape_unif} shows the eigenfunctions
corresponding to the four lowest transmission eigenvalues with index of refraction  $n=16$.

\begin{figure}[ht]
\begin{center}
\begin{minipage}{6.3cm}
\centering\includegraphics[height=6cm, width=6.9cm]{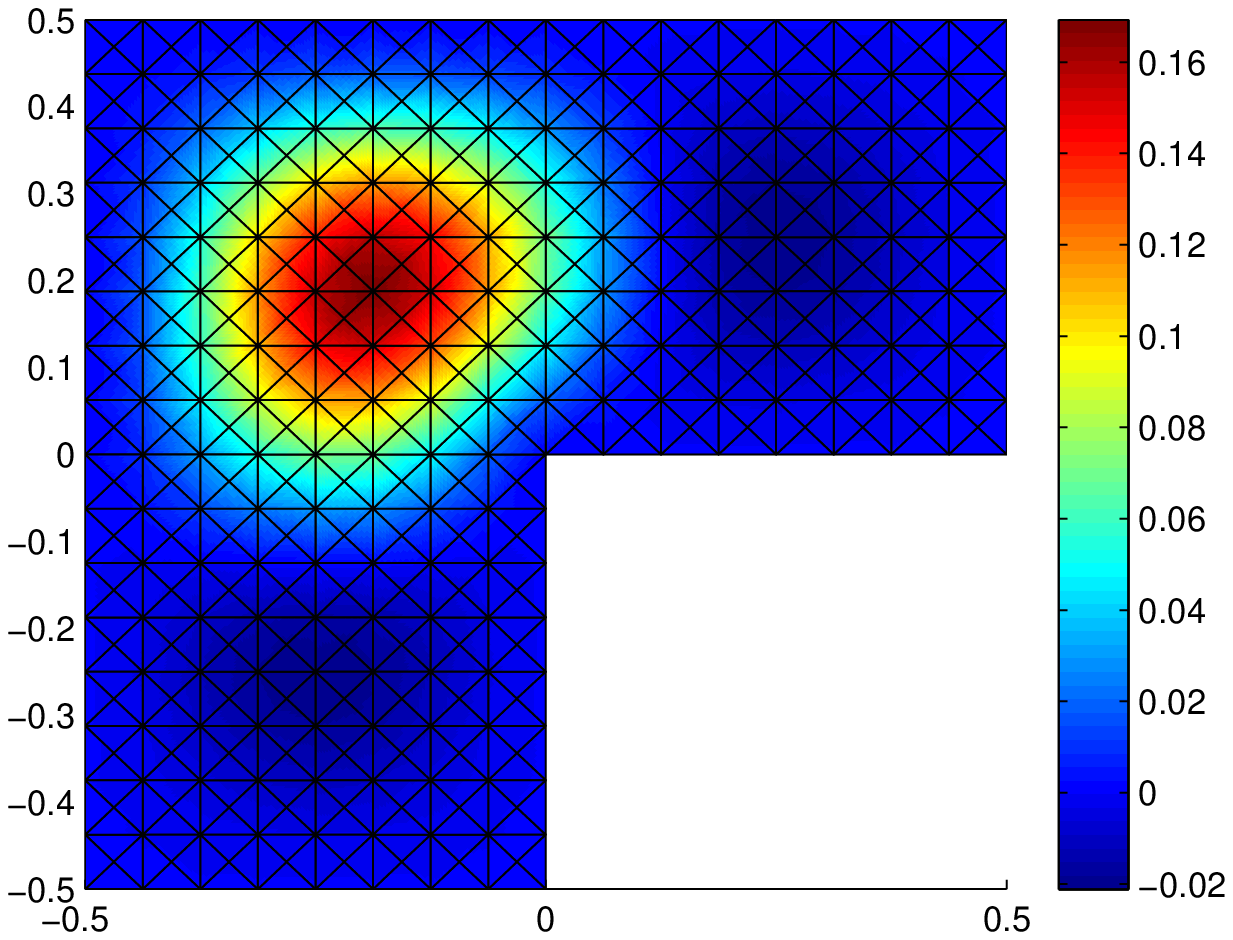}
\end{minipage}
\begin{minipage}{6.3cm}
\centering\includegraphics[height=6cm, width=6.9cm]{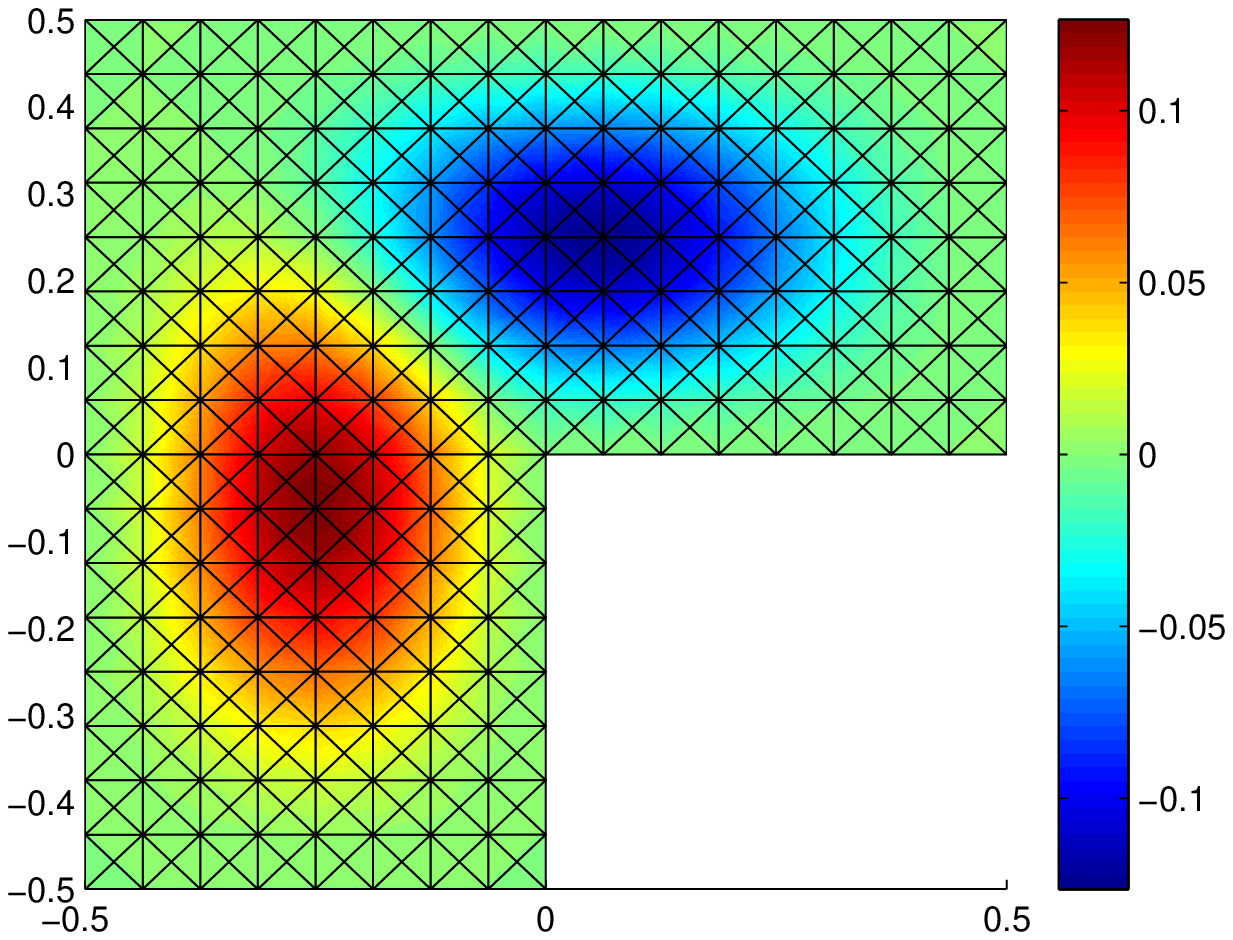}
\end{minipage}
\begin{minipage}{6.3cm}
\centering\includegraphics[height=6cm, width=6.9cm]{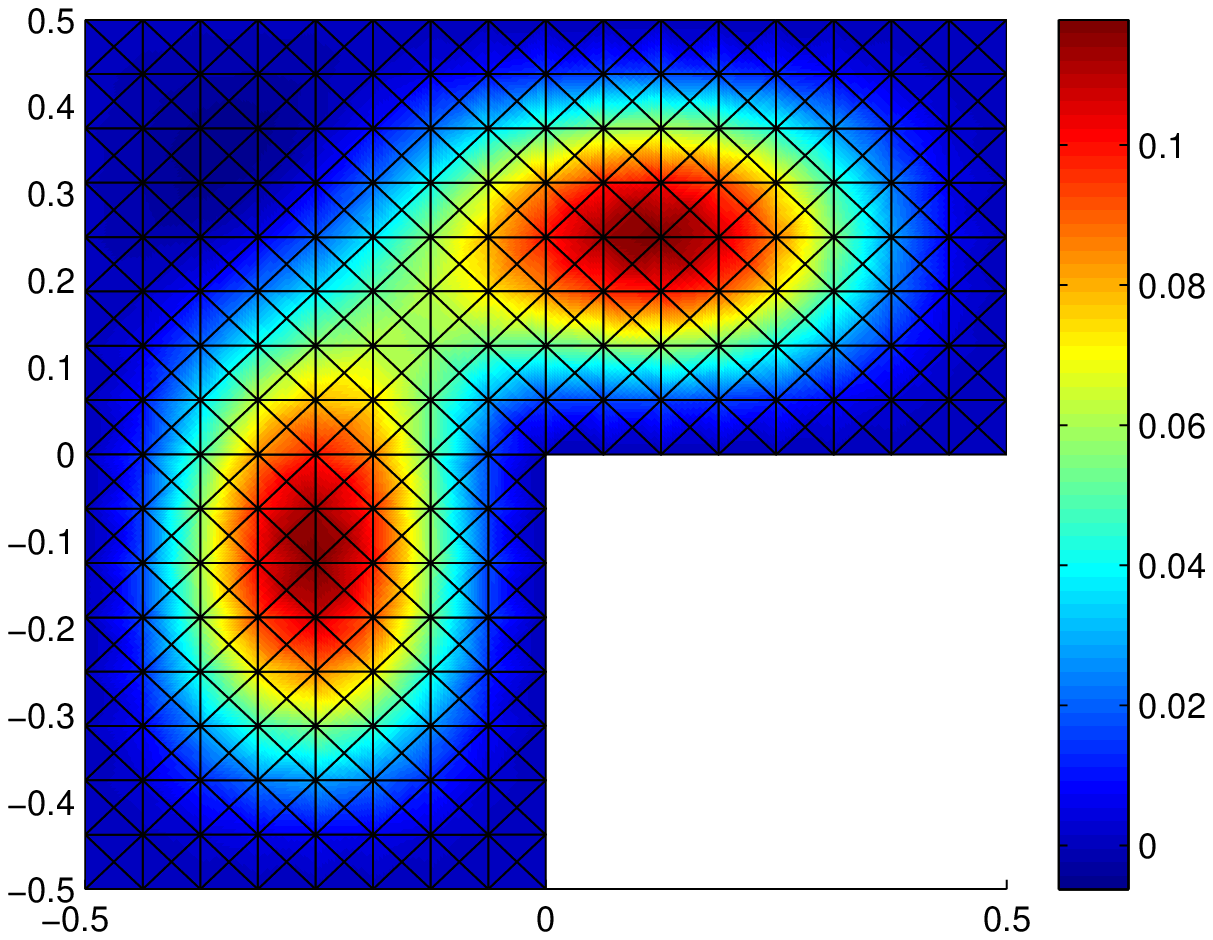}
\end{minipage}
\begin{minipage}{6.3cm}
\centering\includegraphics[height=6cm, width=6.9cm]{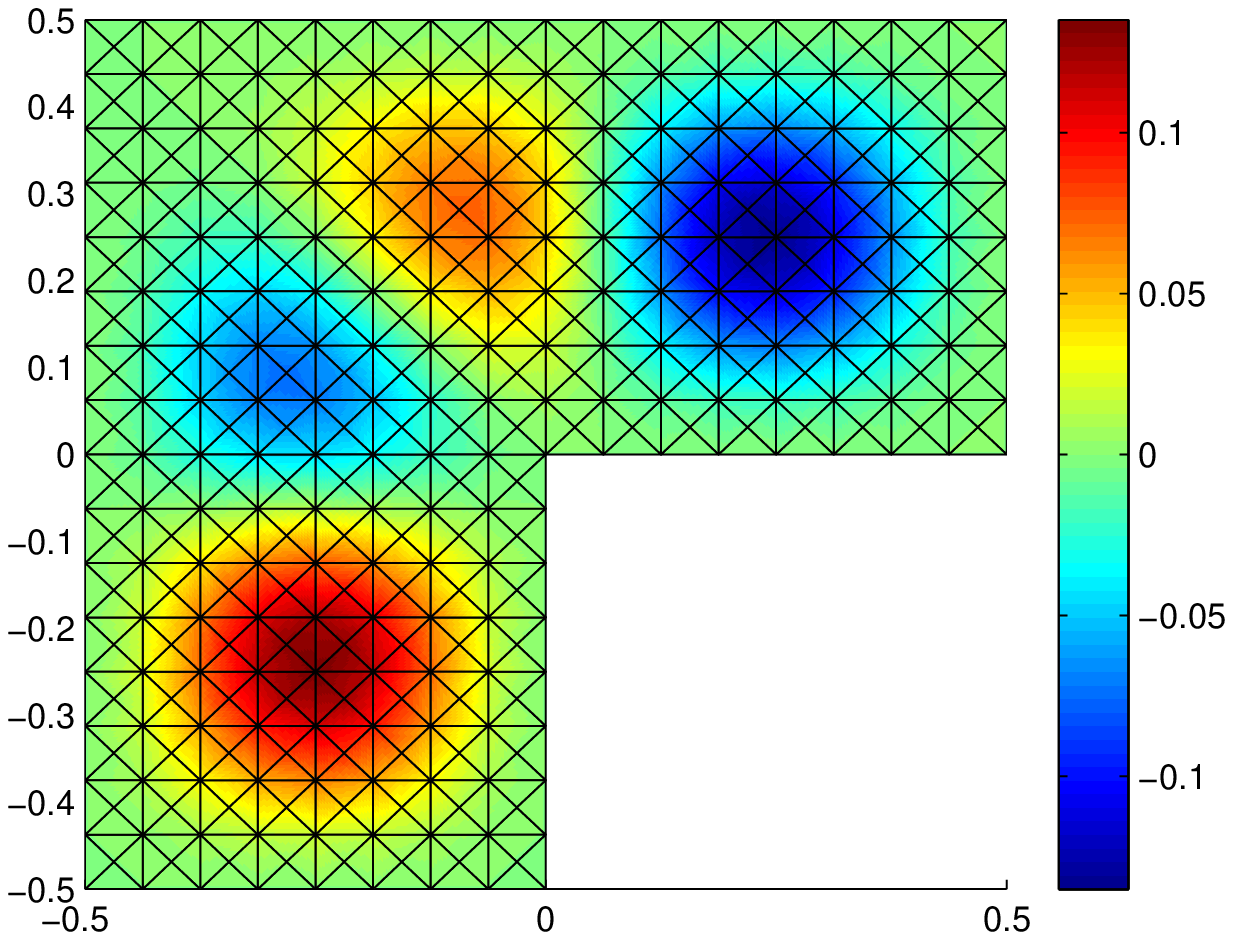}
\end{minipage}
\caption{Test 3. Eigenfunctions $u_{1h}$ (top left), $u_{2h}$ (top right),
$u_{3h}$ (bottom left) and  $u_{4h}$ (bottom right).}
\label{FIG:Square01_n=16_Lshape_unif}
\end{center}
\end{figure}

\section*{Acknowledgments}

The authors are deeply grateful to Prof. Carlo Lovadina
(Universit\`a degli Studi di Milano, Italy) and 
Prof. Rodolfo Rodr\'iguez (Universidad de
Concepci\'on, Chile) for the fruitful discussions.
The First author was partially supported by CONICYT (Chile) through
FONDECYT project 1180913 and by DIUBB through project 171508 GI/VC
Universidad del B\'io-B\'io, Chile.
The second author was partially supported by a CONICYT (Chile)
fellowship.

\bibliographystyle{amsplain}

\end{document}